\title{{\bf Rings Whose Annihilating-Ideal Graphs Have Positive Genus\thanks {The research of
 the second  author was in part supported by a grant from IPM (No.
 89160031)}}}
\author{{{\bf F. Aliniaeifard${^{\rm a}}$ and   M. Behboodi${^{\rm b}}$}}\thanks
{ Corresponding author}\\
  {\small{${^{\rm a,b}}$Department of Mathematical Sciences, Isfahan University of Technology}}\vspace{-1mm}\\ {\small{
Isfahan,
Iran, 84156-83111}}\\
{\small{$^{\rm b}$School of Mathematics,  Institute for Research in Fundamental Sciences (IPM), }} \\
 {\small{Tehran  Iran, 19395-5746}}\\
\footnotesize{${^{\rm a}}$f.aliniaeifard@math.iut.ir}\vspace{-1mm}\\
\footnotesize{${^{\rm b}}$mbehbood@cc.iut.ac.ir}}
\def\be{\begin{enumerate}}
\def\ee{\end{enumerate}}
\newtheorem{ttheo}{Theorem}[section]
\newtheorem{ccoro}[ttheo]{Corollary}
\newtheorem{llem}[ttheo]{Lemma}
\newtheorem{eexam}[ttheo]{Example}
\newtheorem{rrem}[ttheo]{Remark}
\newtheorem{ppro}[ttheo]{Proposition}
\newenvironment{pproof}{\noindent{\bf Proof. }}{}
\date{}
\begin{document}
  \maketitle
\begin{abstract}
{ Let $R$ be a commutative ring and ${\Bbb{A}}(R)$ be the set of
ideals with non-zero annihilators. The annihilating-ideal graph of
$R$ is defined as the graph ${\Bbb{AG}}(R)$ with the vertex set
${\Bbb{A}}(R)^*={\Bbb{A}}\setminus\{(0)\}$ and two distinct
vertices $I$ and $J$ are adjacent if and only if $IJ=(0)$.  We
investigate commutative
   rings $R$  whose annihilating-ideal graphs have positive genus $\gamma(\Bbb{AG}(R))$. It
   is shown that if $R$ is an Artinian ring such that  $\gamma(\Bbb{AG}(R))<\infty$,
   then $R$ has finitely many ideals or $(R,\mathfrak{m})$ is a Gorenstein ring with maximal ideal $\mathfrak{m}$ and
   ${\rm v.dim}_{R/{\mathfrak{m}}}{\mathfrak{m}}/{\mathfrak{m}}^{2}=2$.
   Also,  for any two integers $g\geq 0$ and $q>0$, there are finitely many isomorphism classes of Artinian
    rings $R$ satisfying  the  conditions:
    (i) $\gamma(\Bbb{AG}(R))=g$  and (ii)
   $|R/{\mathfrak{m}}| \leq q$ for every maximal ideal ${\mathfrak{m}}$ of $R$. Also,
    it is shown that if $R$ is a non-domain  Noetherian local ring such that
        $\gamma(\Bbb{AG}(R))<\infty$,
     then  either $R$ is a Gorenstein ring or  $R$ is an Artinian ring with finitely many ideals.\vspace{3mm}\\
  {\footnotesize{\it\bf Key Words:} Commutative ring; annihilating-ideal graph;
  genus of a   graph.}\\
  {\footnotesize{\bf 2010  Mathematics Subject
  Classification:}   05C10; 13E05; 13E10;  13M05; 16P60.}}
\end{abstract}

  \section{\large\bf Introduction}
 The study of algebraic structures, using the properties of
 graphs, became an exciting research topic in the last twenty
 years, leading to many fascinating results and questions. There
 are many papers on assigning a graph to a ring, for instance see,
 \cite{and-liv}, \cite{and-nas}, \cite{beck}, \cite{beh-rak1} and \cite{beh-rak2}.  Throughout this paper, $R$
  denotes a commutative  ring with $1\neq 0$. Let $X$ be
either an element or a subset of $R$. The annihilator of $X$ is
the  ideal Ann$(X)=\{a\in R~:~aX=0\}$. We note that an element
$a\in R$ is called a
 zero-divisor in $R$ if  Ann$(a)\neq \{0\}$ (i.e.,  $ab=0$ for some nonzero element $b\in R$).
  For any subset $Y$ of $R$, we denote $|Y|$ for the cardinality of $Y$ and let
$Y^*=Y\setminus\{0\}$. Also, we denote the finite field of order
$q$ by ${\Bbb{F}}_q$. Let $V$ be a vector space over the field
$\Bbb{F}$. We use the notation $\it {{\rm v.dim}}_{\Bbb{F}}(V)$ to
denote the dimension of $V$ over the field $\Bbb{F}$.

   Let $S_{k}$ denote the sphere with $k$ handles, where $k$ is a non-negative integer, that
is, $S_{k}$ is an oriented surface of genus $k$. The {\it genus}
of a graph $G$, denoted $\gamma(G)$, is the minimal integer $n$
such that the graph can be embedded in $S_{n}$. For details on the
notion of embedding a graph in a surface, see, e.g., \cite[Chapter
6]{white}. Intuitively, $G$ is embedded in a surface if it can be
drawn in the surface so that its edges intersect only at their
common vertices. A genus $0$ graph is called a {\it planar graph}
and a genus $1$ graph is called a {\it toroidal graph}. An
infinite graph $G$ has infinite genus ($\gamma (G)=\infty$), if,
for every natural number $n$, there exists a finite subgraph
$G_{n}$ of $G$ such that $\gamma(G_{n})=n $. We note here that if
$H$ is a subgraph of a graph $G$, then $\gamma(H) \leq \gamma(G)$.
Let $K_n$ denote the complete graph on $n$ vertices; that is,
$K_n$ has vertex set $V$ with $|V|=n$ and
$a-\hspace{-1.5mm}-\hspace{-1.5mm}- b$ is an edge for every
distinct $a,b\in V$. Let $K_{m,n}$ denote the complete bipartite
graph; that is, $K_{m,n}$ has vertex set $V$ consisting of the
disjoint union of two subsets, $V_{1}$ and $V_2$, such that
$|V_{1}|= m$ and $|V_{2}|=n$, and
$a-\hspace{-1.5mm}-\hspace{-1.5mm}- b$ is an edge if and only if
$a \in V_{1}$ and $b\in V_{2}$. We may sometimes write
$K_{|V_{1}|,|V_{2}|}$ to denote the complete bipartite graph with
vertex sets $V_{1}$ and $V_{2}$. Note that $K_{m,n} = K_{n,m}$. It
is well known that
$$\gamma(K_{n})=\lceil \frac{(n-3)(n-4)}{12}\rceil ~~~{\rm for~ all}~ n \geq 3.$$
$$\gamma(K_{m,n})= \lceil \frac{(n-2)(m-2)}{4} \rceil ~~~{\rm for~all}~n \geq 2~ and~ m \geq 2.$$
(see \cite{ring-you}; \cite{ring1}, respectively). For a graph
$G$, the degree of a vertex $\it{v}$ of $G$ is the number of edges
of $G$ incident with $\it{v}$.

Let $R$ be a ring.    We call an
 ideal $I$ of $R$, an {\it annihilating-ideal} if there exists a
 non-zero ideal $J$ of $R$ such that $IJ=(0)$.    We denote  $Z(R)$ the sets of zero-divisors, $\Bbb{A}(R)$ for the set of
  all annihilating-ideals of $R$, $J(R)$ for the  Jacobson radical of $R$,  and  for an ideal $J$ of $R$, we denote
 $\Bbb{I}(J)$ for the set $\{I~:~I~{\rm be~an~ideal~of}~R~{\rm and}~I\subseteq J\}$.
 Also,  by the {\it annihilating-ideal
   graph} $\Bbb{AG}(R)$ of $R$ we mean the graph with vertices
   $\Bbb{AG}(R)^{*} = \Bbb{A}(R) \setminus \{(0)\}$ such that
   there is an (undirect) edge between vertices $I$ and $J$ if and
   only if $I\neq J$ and $IJ=(0)$. Thus $\Bbb{AG}(R)$ is an empty
   graph if and only if $R$ is an integral domain. Recently, the
   idea of annihilating-ideal graph of commutative rings were
   introduced by Behboodi and Rakeei in \cite{beh-rak1},
   \cite{beh-rak2} and by Behboodi et al., in \cite{aal}. The {\it zero-divisor graph} of $R$, $\Gamma(R)$,
   is an undirect graph with the vertex set $Z(R)\setminus \{0\}$ and two distinct
   vertices $x,y$ are adjacent if and only if $xy=(0)$. There are
   some papers which investigate genus of a zero-divisor graph,
   for instance see \cite{wick1}, \cite{wick2} and \cite{wang}. In
   particular in \cite[Theorem 2]{wick2}, it was shown that for any positive integer $g$,
    there are finitely many finite commutative rings whose zero-divisor graph has genus $g$.

In the present paper we investigate
   rings whose annihilating-ideal graphs have positive genus. In
   Theorem \ref{2.7}, it is shown that if $R$ is an Artinian ring and $\gamma(\Bbb{AG}(R)) <
   \infty$, then $R$ has finitely many ideals or $(R,\mathfrak{m})$ is
   Gornestein ring with $dim_{R/{\mathfrak{m}}}{\mathfrak{m}}/{\mathfrak{m}}^{2}=2$
    (a Noetherian local ring $(R,\mathfrak{m})$ is called {\it Gorenstein} if
    ${{\rm v.dim}}_{R/{\mathfrak{m}}}({\rm Ann}({\mathfrak{m}}))=1$).  Also, it is shown
    that,  for two
integers $g\geq 0$ and $q>0$, there are finitely many Artinian
rings $R$ such that
    (i) $\gamma(\Bbb{AG}(R))=g$  and (ii)
   $|R/{\mathfrak{m}}| \leq q$ for every maximal ideal $\mathfrak{m}$ of $R$
   (see Theorem \ref{2.10} and Corollary \ref{2.11}).
    Also, the genus of annihilating-ideal graphs
   of Noetherian  rings are studied. It is shown that if $R$ is a
    Noetherian local ring and $\gamma(\Bbb{AG}(R))<\infty$, then either $R$ is a domain or
     all non-trivial ideals of $R$ are vertices of $\Bbb{AG}(R)$ (see Proposition \ref{3.4}).
     Also if  $R$ is a Noetherian ring such that all non-trivial ideals of
$R$ are vertices of $\Bbb{AG}(R)$ and
$\gamma(\Bbb{AG}(R))<\infty$,  then either $R$ is  a Gorenstein
ring or $R$ is an Artinian ring with finitely many ideals (see
Theorem \ref{3.5}). Finally,  it is shown that if $R$ is a
non-domain Noetherian local ring such that
$\gamma(\Bbb{AG}(R))<\infty$,
 then either $R$ is  a Gorenstein ring or $R$ is an Artinian ring with finitely many ideals
  (see Corollary \ref{3.6}).

\section{\large\bf The Genus of the Annihilating-Ideal Graphs of  Artinian Rings}

  The following
useful lemma and remark   will be used frequently in this paper.

\begin{llem}\label{2.1} {\rm \cite[Proposition 1.3]{beh-rak1}} {\it Let $R$ be an Artinian ring.
Then every nonzero proper ideal $I$ is a vertex of $\Bbb{AG}(R)$}.
\end{llem}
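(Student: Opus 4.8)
The plan is to unwind what it means for $I$ to be a vertex: by definition the vertex set is $\Bbb{A}(R)^{*}$, the set of nonzero annihilating-ideals, so I must show that a nonzero proper ideal $I$ of the Artinian ring $R$ satisfies ${\rm Ann}(I)\neq(0)$ (equivalently, that there is a nonzero ideal $J$ with $IJ=(0)$). Since $I$ is already assumed nonzero, the whole content is to produce a nonzero annihilator. My main tool will be the structure theorem for Artinian rings: $R$ decomposes as a finite direct product $R\cong R_{1}\times\cdots\times R_{k}$ of Artinian local rings $(R_{i},\mathfrak{m}_{i})$.

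Under this decomposition every ideal splits as $I=I_{1}\times\cdots\times I_{k}$ with $I_{i}$ an ideal of $R_{i}$, and annihilators are computed componentwise, ${\rm Ann}_{R}(I)={\rm Ann}_{R_{1}}(I_{1})\times\cdots\times{\rm Ann}_{R_{k}}(I_{k})$. So it suffices to make one factor ${\rm Ann}_{R_{i}}(I_{i})$ nonzero. First I would dispose of the easy case: if some component $I_{i}$ is zero, then ${\rm Ann}_{R_{i}}(I_{i})=R_{i}\neq(0)$ and we are done. Otherwise every $I_{i}$ is nonzero; since $I$ is proper, some component $I_{j}$ is a nonzero proper ideal of the local ring $R_{j}$, and the problem reduces to this single local factor.

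The crux is therefore the local case: in an Artinian local ring $(R_{j},\mathfrak{m}_{j})$ that is not a field, I claim every nonzero proper ideal has nonzero annihilator. Here I would use that in an Artinian ring the maximal ideal is nilpotent, so there is a least $n\geq 2$ with $\mathfrak{m}_{j}^{\,n}=(0)$ and $\mathfrak{m}_{j}^{\,n-1}\neq(0)$. Every proper ideal of a local ring lies in the maximal ideal, so $I_{j}\subseteq\mathfrak{m}_{j}$, whence $\mathfrak{m}_{j}^{\,n-1}I_{j}\subseteq\mathfrak{m}_{j}^{\,n-1}\mathfrak{m}_{j}=\mathfrak{m}_{j}^{\,n}=(0)$. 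Thus $(0)\neq\mathfrak{m}_{j}^{\,n-1}\subseteq{\rm Ann}_{R_{j}}(I_{j})$, giving the required nonzero annihilator.

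Combining the two cases yields ${\rm Ann}_{R}(I)\neq(0)$, so $I$ is a nonzero annihilating-ideal and hence a vertex of $\Bbb{AG}(R)$. I expect the only real obstacle to be the local step — specifically invoking nilpotency of $\mathfrak{m}_{j}$ and noting that $\mathfrak{m}_{j}^{\,n-1}$ uniformly annihilates every ideal contained in $\mathfrak{m}_{j}$; the product decomposition and the componentwise bookkeeping are routine.
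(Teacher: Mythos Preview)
Your argument is correct: the decomposition into Artinian local factors, the componentwise description of annihilators, and the use of nilpotency of each $\mathfrak{m}_{j}$ to exhibit $\mathfrak{m}_{j}^{\,n-1}$ as a nonzero annihilator are all sound, and the case split (some $I_{i}=(0)$ versus all $I_{i}\neq(0)$) is handled cleanly.

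There is nothing to compare against in this paper, however: Lemma~2.1 is quoted without proof, with a citation to \cite[Proposition~1.3]{beh-rak1}. So your write-up is not competing with an in-paper argument but rather supplying one where the authors chose to cite. For what it is worth, one can also avoid the structure theorem entirely: since $I$ is proper it lies in some maximal ideal, and in an Artinian ring the Jacobson radical $J(R)$ is nilpotent; if $I\subseteq J(R)$ then $I$ itself is nilpotent and the largest nonzero power of $I$ annihilates $I$, while if $I\not\subseteq J(R)$ one can pick a maximal ideal $\mathfrak{m}$ not containing $I$ and use the idempotent splitting $R\cong R_{\mathfrak{m}}\times R/\ker(R\to R_{\mathfrak{m}})$ to reduce. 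Your route via the full product decomposition is arguably the cleanest and most transparent, at the modest cost of invoking the structure theorem up front.
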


\begin{rrem}\label{2.2} {\rm It is well known that if $V$ is a vector space over an
infinite field $\Bbb{F}$, then $V$ can not be the union of
finitely many proper subspaces; see for example
\cite[p.283]{hal}.}
\end{rrem}

A local Artinian principal ring is called a {\it special principal
ring}
  and has an extremely simple ideal structure: there are only finitely many ideals,
   each of which is a power of the maximal ideal.

\begin{llem}\label{2.3}
 Let $(R,\mathfrak{m})$ be a local ring with ${\mathfrak{m}}^t=(0)$. If for a positive integer
$n$, ${\it {\rm
v.dim}_{R/{\mathfrak{m}}}}({\mathfrak{m}}^{n}/{\mathfrak{m}}^{n+1})
=1$ and ${\mathfrak{m}}^{n}$ is a finitely generated $R$-module,
 Then $\Bbb{I}({\mathfrak{m}}^{n})=\{{\mathfrak{m}}^i~:~t\leq i\leq n\}$. Moreover
 If $n=1$, then $R$ is a  special principal
ring.
 \end{llem}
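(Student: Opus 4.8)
The plan is to prove the displayed equality $\Bbb{I}(\mathfrak{m}^{n})=\{\mathfrak{m}^i:n\le i\le t\}$ (this is the intended range: the powers form the descending chain $\mathfrak{m}^n\supseteq\mathfrak{m}^{n+1}\supseteq\cdots\supseteq\mathfrak{m}^t=(0)$). First I would make two harmless reductions: replace $t$ by the least integer with $\mathfrak{m}^t=(0)$, which alters neither the hypotheses nor the claimed set of ideals, so that $\mathfrak{m}^{t-1}\neq(0)$; and note that $\dim_{R/\mathfrak{m}}(\mathfrak{m}^n/\mathfrak{m}^{n+1})=1$ forces $\mathfrak{m}^n\neq\mathfrak{m}^{n+1}$, hence $\mathfrak{m}^n\neq(0)$ and $n\le t-1$. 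The inclusion $\{\mathfrak{m}^i:n\le i\le t\}\subseteq\Bbb{I}(\mathfrak{m}^n)$ is immediate since $i\ge n$ gives $\mathfrak{m}^i\subseteq\mathfrak{m}^n$, so all the work is in the reverse inclusion.

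Next I would show $\mathfrak{m}^n$ is principal. Choosing $x\in\mathfrak{m}^n$ whose image spans the one-dimensional space $\mathfrak{m}^n/\mathfrak{m}^{n+1}$, one gets $\mathfrak{m}^n=Rx+\mathfrak{m}\cdot\mathfrak{m}^n$; since $\mathfrak{m}^n$ is finitely generated, Nakayama's Lemma gives $\mathfrak{m}^n=(x)$. (One could instead iterate $\mathfrak{m}^n=Rx+\mathfrak{m}^{n+1}=Rx+\mathfrak{m}^{n+2}=\cdots$ and use $\mathfrak{m}^t=(0)$, avoiding finite generation altogether.)

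The heart of the argument, and the step I expect to be the main obstacle, is the monotonicity $\dim_{R/\mathfrak{m}}(\mathfrak{m}^k/\mathfrak{m}^{k+1})\le 1$ for every $k\ge n$. I would extract this from the associated graded ring $A=\bigoplus_{i\ge 0}\mathfrak{m}^i/\mathfrak{m}^{i+1}$, a commutative graded $R/\mathfrak{m}$-algebra generated in degree one, via the elementary (Macaulay-type) claim: if $\dim A_k=1$ then $\dim A_{k+1}\le 1$. To see it, write a generator of the line $A_k$ as a product $w=\bar a_1\cdots\bar a_k$ of degree-one elements, possible because $A_k=(A_1)^k$. For any $v\in A_1$ the element $v\bar a_1\cdots\bar a_{k-1}$ lies in $A_k=\langle w\rangle$, say it equals $\lambda w$; multiplying by $\bar a_k$ and using commutativity gives $vw=\lambda\,\bar a_k w$. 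Thus every $vw$ in $A_{k+1}=A_1\cdot A_k=A_1 w$ is a scalar multiple of the single element $\bar a_k w$, so $\dim A_{k+1}\le 1$. Starting from $\dim A_n=1$ and inducting yields $\dim A_k\le 1$ for all $k\ge n$; moreover $\mathfrak{m}^k\neq(0)$ forces $\mathfrak{m}^k\neq\mathfrak{m}^{k+1}$ (otherwise iterating $\mathfrak{m}^k=\mathfrak{m}\cdot\mathfrak{m}^k$ makes $\mathfrak{m}^k=\mathfrak{m}^t=(0)$), so in fact $\dim A_k=1$ for $n\le k\le t-1$.

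Granting this, the ideal structure drops out. Given a nonzero ideal $I\subseteq\mathfrak{m}^n$, let $k$ be largest with $I\subseteq\mathfrak{m}^k$; it satisfies $n\le k\le t-1$ since $I\neq(0)=\mathfrak{m}^t$. Then the image of $I$ in the one-dimensional space $\mathfrak{m}^k/\mathfrak{m}^{k+1}$ is nonzero, hence all of it, so $\mathfrak{m}^k=I+\mathfrak{m}^{k+1}=I+\mathfrak{m}\cdot\mathfrak{m}^k$. Applying Nakayama to $\mathfrak{m}^k/I$ (or iterating and invoking $\mathfrak{m}^t=(0)$) gives $\mathfrak{m}^k=I$, which establishes $\Bbb{I}(\mathfrak{m}^n)=\{\mathfrak{m}^i:n\le i\le t\}$. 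Finally, for $n=1$ this says every proper ideal of the local ring $R$ is a power of $\mathfrak{m}=(x)$; there are then only finitely many ideals and $\mathfrak{m}$ is principal, so $R$ is an Artinian local principal ring, that is, a special principal ring.
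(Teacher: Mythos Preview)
Your proof is correct. Both you and the paper begin with Nakayama to get $\mathfrak{m}^n=Rx$, then for a nonzero $I\subseteq\mathfrak{m}^n$ locate the largest $i$ with $I\subseteq\mathfrak{m}^i$ and aim to show $I=\mathfrak{m}^i$. At that point the paper simply asserts that any $a\in I\setminus\mathfrak{m}^{i+1}$ can be written $a=bx^i$ and concludes $b$ is a unit; this is transparent when $n=1$ (where $\mathfrak{m}=Rx$ and hence $\mathfrak{m}^i=Rx^i$) but is not justified as written for general $n$, since $x\in\mathfrak{m}^n$ only yields $x^i\in\mathfrak{m}^{ni}$. Your associated-graded step---showing that $\dim_{R/\mathfrak{m}}(\mathfrak{m}^k/\mathfrak{m}^{k+1})\le 1$ propagates from $k=n$ to all larger $k$ via the elementary Macaulay-type inequality in $\bigoplus_i\mathfrak{m}^i/\mathfrak{m}^{i+1}$---is precisely what closes this gap: once each successive quotient is one-dimensional, the equality $I=\mathfrak{m}^i$ follows by the iteration $\mathfrak{m}^i=I+\mathfrak{m}^{i+1}=I+\mathfrak{m}^{i+2}=\cdots=I+\mathfrak{m}^t=I$ that you describe. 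So the overall outline matches the paper's, but your monotonicity argument supplies the missing justification and makes the proof work uniformly in $n$.
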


\begin{proof}   Since ${\rm v.dim}_{R/{\mathfrak{m}}}({\mathfrak{m}}^{n}/{\mathfrak{m}}^{n+1})=1$, by using Nakayama's lemma, ${\mathfrak{m}}^n=Rx$ for some
$x\in R$. Now,  let $I$ be a nonzero ideal of $R$ such that
$I\subseteq {\mathfrak{m}}^{n}$. Since ${\mathfrak{m}}^t=(0)$,
there exists a natural number $i\geq n$ such that $I \subseteq
{\mathfrak{m}}^{i}$ and $I \nsubseteq {\mathfrak{m}}^{i+1}$. Let
$a \in I \setminus {\mathfrak{m}}^{i+1}$. We have $a= bx^{i}$, for
some $b \in R$. If $b \in m$, then $a \in {\mathfrak{m}}^{i+1}$, a
contradiction. Thus $b$ is unit. Hence $x^{i} \in I$. This implies
that $I = (x^{i})$, as desired. Thus
$\Bbb{I}({\mathfrak{m}}^{n})=\{{\mathfrak{m}}^i~:~t\leq i\leq
n\}$. Clearly, if $n=1$, then $R$ is a  special principal ring.
\hfill $\square$
\end{proof}

\begin{llem}\label{2.4}
Let $(R,\mathfrak{m})$ be an Artinian local ring with
$|R/{\mathfrak{m}}|=\infty$, ${\mathfrak{m}}^{3}=(0)$ and ${\rm
v.dim}_{R/{\mathfrak{m}}} {\rm Ann}({\mathfrak{m}})=1$. If
$\gamma(\Bbb{AG}(R))<\infty$, then ${\rm
v.dim}_{R/{\mathfrak{m}}}{\mathfrak{m}}/{\mathfrak{m}}^{2}\leq 2$
\end{llem}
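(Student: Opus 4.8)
The plan is to argue by contradiction against the conclusion failing, i.e. assuming $\gamma(\Bbb{AG}(R))<\infty$ I would rule out ${\rm v.dim}_{R/\mathfrak{m}}\mathfrak{m}/\mathfrak{m}^{2}\geq 3$. First I would fix the algebraic model. Since $\mathfrak{m}^{3}=(0)$ we have $\mathfrak{m}^{2}\subseteq {\rm Ann}(\mathfrak{m})$, and as ${\rm v.dim}_{R/\mathfrak{m}}{\rm Ann}(\mathfrak{m})=1$ the ideal $\mathfrak{m}^{2}$ is either $(0)$ or $1$-dimensional over $k:=R/\mathfrak{m}$. If $\mathfrak{m}^{2}=(0)$ then $\mathfrak{m}\subseteq {\rm Ann}(\mathfrak{m})$, whence ${\rm v.dim}_{k}\mathfrak{m}/\mathfrak{m}^{2}={\rm v.dim}_{k}\mathfrak{m}\leq 1$ and there is nothing to prove; so the substantive case is $\mathfrak{m}^{2}={\rm Ann}(\mathfrak{m})=kz$. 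Here ${\rm Ann}(\mathfrak{m})$ is the unique minimal ideal, so every nonzero ideal contains $\mathfrak{m}^{2}$; hence the nonzero proper ideals are exactly the preimages $\widetilde U$ of the $k$-subspaces $U\subseteq V:=\mathfrak{m}/\mathfrak{m}^{2}$, and the multiplication $\mathfrak{m}\times\mathfrak{m}\to\mathfrak{m}^{2}\cong k$ descends to a symmetric bilinear form $\phi$ on $V$ whose radical is ${\rm Ann}(\mathfrak{m})/\mathfrak{m}^{2}=(0)$, i.e. $\phi$ is nondegenerate. For subspaces $U,U'$ the vertices satisfy $\widetilde U\,\widetilde{U'}=(0)$ iff $U\perp U'$; in particular $\mathfrak{m}^{2}$ (the subspace $0$) is adjacent to every other vertex.

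Writing $d:={\rm v.dim}_{k}V$, the goal becomes: if $d\geq 3$ then $\gamma(\Bbb{AG}(R))=\infty$. I would produce, for each $n$, a finite subgraph of genus at least $n$. Note that by the infinite field hypothesis (Remark \ref{2.2}) every subspace of $V$ of dimension $\geq 2$ contains infinitely many distinct lines, hence gives infinitely many distinct principal-ideal vertices $\widetilde{\ell}$.

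The case $d\geq 4$ I would dispatch first, as it is easy. Split off a nondegenerate $2$-dimensional subspace $W\subseteq V$, so that $V=W\perp W^{\perp}$ with ${\rm v.dim}_{k}W^{\perp}=d-2\geq 2$. Choosing $n$ distinct lines $\ell_{1},\dots,\ell_{n}\subseteq W$ and $n$ distinct lines $m_{1},\dots,m_{n}\subseteq W^{\perp}$, the ideals $\widetilde{\ell_{i}},\widetilde{m_{j}}$ are pairwise distinct (because $W\cap W^{\perp}=(0)$) and $\widetilde{\ell_{i}}\,\widetilde{m_{j}}=(0)$ since $\ell_{i}\subseteq W\perp W^{\perp}\ni m_{j}$. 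This is a copy of $K_{n,n}$, and as $\gamma(K_{n,n})=\lceil (n-2)^{2}/4\rceil\to\infty$ we obtain $\gamma(\Bbb{AG}(R))=\infty$, contradicting the hypothesis.

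The hard part, which I expect to be the main obstacle, is $d=3$. Here the complete-bipartite strategy genuinely breaks down: for distinct lines $\ell_{1},\ell_{2}$ one has ${\rm v.dim}_{k}(\ell_{1}^{\perp}\cap\ell_{2}^{\perp})={\rm v.dim}_{k}(\ell_{1}+\ell_{2})^{\perp}=1$, so the principal ideals induce a $K_{2,2}$-free graph; since $K_{4}$ and $K_{3,3}$ both contain $K_{2,2}$ and the single universal vertex $\mathfrak{m}^{2}$ can enlarge a clique by at most one, $\Bbb{AG}(R)$ contains neither $K_{3,3}$ nor $K_{5}$. Thus no arbitrarily large complete or complete bipartite subgraphs are available, and I would instead force the genus up by an edge count: Euler's formula gives $\gamma(G)\geq (E-3V+6)/6$ for a finite simple graph with $V$ vertices and $E$ edges, so it suffices to exhibit finite subgraphs in which the number of orthogonal pairs of lines grows faster than linearly in the number of lines. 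Producing such orthogonality-rich finite configurations in $\Bbb{P}(V)$ is the crux of the argument: in the presence of large finite subfields $\Bbb{F}_{q}\subseteq k$ one can use their $\Bbb{F}_{q}$-rational points (the polarity graphs, with $E\sim\tfrac12 N^{3/2}$), while in general one must build a point set that is incidence-rich against the polar lines of its own members. Once configurations with $E/V\to\infty$ are in hand, the Euler bound yields $\gamma\to\infty$, contradicting $\gamma(\Bbb{AG}(R))<\infty$ and forcing $d\leq 2$.
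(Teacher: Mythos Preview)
Your bilinear–form model is correct and makes the structure transparent: with $\mathfrak m^{2}={\rm Ann}(\mathfrak m)$ the unique minimal ideal, the nonzero proper ideals are exactly the preimages of $k$-subspaces of $V=\mathfrak m/\mathfrak m^{2}$, and adjacency is orthogonality for a nondegenerate symmetric form $\phi$. Your treatment of $d\geq 4$ (split off a nondegenerate plane $W$ and pair lines in $W$ against lines in $W^{\perp}$ to get $K_{n,n}$) is valid and a bit cleaner than the paper's version, which instead shows that ${\rm Ann}(x_{1})\cap{\rm Ann}(x_{2})$ has $k$-dimension $\geq 2$ over $\mathfrak m^{2}$ and hence yields $K_{\infty,3}$ against $\{Rx_{1},Rx_{2},\mathfrak m^{2}\}$.

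The genuine gap is your $d=3$ argument. You correctly observe that the line–orthogonality graph is $K_{2,2}$-free, so no large $K_{m,n}$ or $K_{n}$ is available; but your proposed fix via the Euler bound $\gamma\geq (E-3V+6)/6$ requires producing finite point sets in $\Bbb P(V)$ with superlinear orthogonal-pair count, and you only supply this when $k$ contains arbitrarily large finite subfields. That hypothesis fails for $k=\Bbb Q$, $k=\Bbb R$, or $k=\Bbb F_{p}(t)$, and there is no evident construction over such fields realizing $E/V\to\infty$ inside the polarity graph; the sentence ``in general one must build a point set that is incidence-rich\ldots'' is precisely the missing idea, not a routine step. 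The paper closes this case by a different mechanism: it builds \emph{topological} copies of $K_{n}$. Given line-vertices $\ell_{1},\dots,\ell_{n}$, for each pair $(i,j)$ one picks $\alpha\subset\ell_{i}^{\perp}$ and $\gamma\subset\ell_{j}^{\perp}$ (each a $2$-dimensional $k$-space, hence containing infinitely many lines by Remark~\ref{2.2}) and sets $\beta=(\alpha+\gamma)^{\perp}$, obtaining a path $\ell_{i}-\alpha-\beta-\gamma-\ell_{j}$; because only finitely many choices of $(\alpha,\gamma)$ are excluded at each stage, these paths can be made internally disjoint. This yields a subdivision of $K_{n}$ for every $n$, hence $\gamma(\Bbb{AG}(R))=\infty$. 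Replacing your density argument by this path/subdivision construction repairs the proof without any field-arithmetic hypotheses.
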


\begin{proof}
First we assume that  ${\it {\rm v.dim}_{R/{\mathfrak{m}}}}
({\mathfrak{m}}/{\mathfrak{m}}^{2}) =3$. Then there exist
infinitely many subspaces with dimension one. Let
$Rx/{\mathfrak{m}}^{2}$ and $Ry/{\mathfrak{m}}^{2}$ be two
distinct subspaces of dimension one of
${\mathfrak{m}}/{\mathfrak{m}}^{2}$. Since $Rx\cong R/{\rm
Ann}(x)$ and $Rx$ has only one
 nonzero proper $R$-submodule,  ${\mathfrak{m}}/{\rm{Ann}}(x)$ is the only  nonzero proper ideal of $R/{\rm Ann}(x)$.
 Therefore,  ${\rm  v.dim}_{R/{\mathfrak{m}}}{\rm Ann}(x)=2$. Similarly, ${\rm v.dim}_{R/{\mathfrak{m}}}{\rm Ann}(y)=2$.
  Therefore,  $|\Bbb{I}({\rm Ann}(x))|=|\Bbb{I}({\rm Ann}(y))|=\infty$.
  If  ${\rm Ann}(x)={\rm Ann}(y)$, then since  $(Rx){\rm Ann}(x)=(0)$,
  $(Ry){\rm Ann}(x)=(0)$ and ${\mathfrak{m}}^{2}{\rm Ann}(x)=(0)$, we conclude that $K_{|\Bbb{I}({\rm
  Ann}(x))|,3}$  is a subgraph of $\Bbb{AG}(R)$. Thus by the formula for genus of  complete bipartite
   graph, $\gamma(\Bbb{AG}(R))=\infty$, a contradiction. Thus we may assume
    that ${\rm Ann}(x)\neq {\rm Ann}(y)$. If ${\rm  Ann}(x)\cap {\rm Ann}(y)=(0)$, then
     $({\rm Ann}(x))({\rm Ann}(y))=(0)$ and so  $K_{|\Bbb{I}({\rm Ann}(x))|}$
 is a subgraph of $\Bbb{AG}(R)$, and hence, by the formula for complete
 graphs $\gamma(\Bbb{AG}(R))=\infty$, a contradiction. Therefore,  ${\rm  Ann}(x)\cap {\rm Ann}(y)\neq(0)$ and so
  ${\rm v.dim}_{R/{\mathfrak{m}}}({\rm Ann}(x)\cap {\rm Ann}(y))/{\mathfrak{m}}^{2}=1$.\\
  Suppose that $(Rx)(Ry)\neq (0)$. Since ${\rm v.dim}_{R/{\mathfrak{m}}}({\rm Ann}(x)\cap {\rm Ann}(y))/{\mathfrak{m}}^{2}=1$, there exists ideal $K$ such
 that $Rx-\hspace{-2mm}-\hspace{-2mm}-K-\hspace{-2mm}-\hspace{-2mm}-Ry$. Let $I_{1}$ be an ideal such that $I_{1}\in \Bbb{I}({\rm Ann}(x))\setminus \{K,Ry\}$  and ${\rm
 v.dim}_{R/{\mathfrak{m}}}a_{1}/{\mathfrak{m}}^{2}=1$. Let $J_{1}$ be an ideal such that $J_{1}\in \Bbb{I}({\rm Ann}(x))\setminus\{Rx,Ry,K,I_{1}, {\rm
 Ann}(x)\cap {\rm Ann}(y)\}$ such that ${\rm v.dim}_{R/{\mathfrak{m}}}J_{1}/{\mathfrak{m}}^{2}=1$. Let $K_{1}={\rm Ann}(I_{1})\cap {\rm
 Ann}(J_{1})$. Therefore,  $Rx-\hspace{-2mm}-\hspace{-2mm}-I_{1}-\hspace{-2mm}-\hspace{-2mm}-K_{1}-\hspace{-2mm}-\hspace{-2mm}-J_{1}-\hspace{-2mm}-\hspace{-2mm}-Ry$
 is a path between $Rx$ and $Ry$. Now, let $I_{n}\in \Bbb{I}({\rm Ann}(x))\setminus
 \{I_{i},J_{i},K_{i},Rx,Ry, {\rm Ann}(x)\cap {\rm Ann}(y),{\rm Ann}(x)\cap {\rm Ann}(K_{i}), i=1,2,..n-1\}$ such that ${\rm
 v.dim}_{R/{\mathfrak{m}}}I_{n}/{\mathfrak{m}}^{2}=1$ and $J_{n}\in \Bbb{I}({\rm Ann}(x))\setminus \{I_{i},J_{i-1},K_{i-1},I_{n},Rx,Ry, {\rm
 Ann}(x)\cap {\rm Ann}(y),{\rm Ann}(x)\cap {\rm Ann}(K_{i-1}), i=1,2,..n\}$ such that ${\rm v.dim}_{R/{\mathfrak{m}}}J_{n}/{\mathfrak{m}}^{2}=1$.
  Suppose that  $K_{n}={\rm Ann}(x_{n})\cap {\rm Ann}(y_{n})$, then  $Rx-\hspace{-2mm}-\hspace{-2mm}-I_{n}-\hspace{-2mm}-\hspace{-2mm}-K_{n}-\hspace{-2mm}-\hspace{-2mm}-J_{n}-\hspace{-2mm}-\hspace{-2mm}-Ry$ is a path between $Rx$ and $Ry$.
 Therefore,  there exists infinitely many path  between $Rx$ and $Ry$.
Thus either $(Rx)(Ry)=(0)$ or there exist infinitely many path between $Rx$ and $Ry$.\\
  Since $Rx/{\mathfrak{m}}^{2}$ and $Ry/{\mathfrak{m}}^{2}$ are two arbitrary distinct
  one dimensional
 $R/{\mathfrak{m}}$-subspaces of ${\rm Ann}(x)$ and there are infinitely many
 subspaces of dimension one of ${\rm Ann}(x)$, one can easily see
 that $\gamma(\Bbb{AG}(R))=\infty$, a contradiction.\\
Now, we assume that  ${\it {\rm v.dim}_{R/{\mathfrak{m}}}}
({\mathfrak{m}}/{\mathfrak{m}}^{2}) >3$. Suppose that
$\{x_{1}+{\mathfrak{m}}^{2}, x_{2}+{\mathfrak{m}}^{2},
x_{3}+{\mathfrak{m}}^{2},...,x_{n}+{\mathfrak{m}}^{2}\}$ is  a
basis for ${\mathfrak{m}}/{\mathfrak{m}}^{2}$ over
$R/{\mathfrak{m}}$. Since $Rx_{1}\cong R/{\rm Ann}(x_{1})$ and
$Rx_{1}$ has one $R$-submodule, ${\it {\rm
v.dim}_{R/{\mathfrak{m}}}} ({\rm Ann}(x_{1})/{\mathfrak{m}}^{2})
=n-1$. Similarly ${\it {\rm v.dim}_{R/{\mathfrak{m}}}} ({\rm
Ann}(x_{2})/{\mathfrak{m}}^{2}) =n-1$. Therefore,  ${\it {\rm
v.dim}_{R/{\mathfrak{m}}}} ({\rm Ann}(x_{2})\cap {\rm
Ann}(x_{1}))/{\mathfrak{m}}^{2}) =n-2$. Since $(Rx_{1})({\rm
Ann}(x_{2})\cap {\rm Ann}(x_{1}))=(0)$, $(Rx_{2})({\rm
Ann}(x_{2})\cap {\rm Ann}(x_{1}))=(0)$ and
${\mathfrak{m}}^{2}({\rm Ann}(x_{2})\cap {\rm Ann}(x_{1}))=(0)$,
we conclude that  $K_{|\Bbb{I}(({\rm Ann}(x_{2})\cap {\rm
Ann}(x_{1})))|,3}$ is a subgraph of $\gamma(\Bbb{AG}(R))$. Thus by
the formula for genus of complete bipartite graphs
$\gamma(\Bbb{AG}(R)) = \infty$, a contradiction. \hfill$\square$
\end{proof}

\begin{ttheo}\label{2.5}
 Let $(R,\mathfrak{m})$ be an Artinian local ring.
If $\gamma(\Bbb{AG}(R)) < \infty$, then $R$ has finitely many
ideals or $R$ is a Gorenstein ring with
$dim_{R/{\mathfrak{m}}}{\mathfrak{m}}/{\mathfrak{m}}^{2}=2$.
\end{ttheo}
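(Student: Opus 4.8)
The plan is to assume that $R$ has infinitely many ideals and to deduce that $R$ is Gorenstein with $\dim_{R/\mathfrak{m}}\mathfrak{m}/\mathfrak{m}^{2}=2$; the alternative, that $R$ has finitely many ideals, is the first horn of the conclusion. First I would record that $R/\mathfrak{m}$ must then be infinite: an Artinian local ring with finite residue field is finite, since each $\mathfrak{m}^{i}/\mathfrak{m}^{i+1}$ is finite-dimensional over the finite field $R/\mathfrak{m}$, and a finite ring has only finitely many ideals. Throughout, Lemma \ref{2.1} lets me treat every nonzero proper ideal as a vertex of $\mathbb{AG}(R)$, and Remark \ref{2.2} supplies infinitely many one-dimensional subspaces of any $R/\mathfrak{m}$-vector space of dimension $\geq 2$.

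Next I would establish the two easy halves of the target. For the Gorenstein property, suppose $\dim_{R/\mathfrak{m}}\mathrm{Ann}(\mathfrak{m})\geq 2$. Every $R/\mathfrak{m}$-subspace of $\mathrm{Ann}(\mathfrak{m})$ is an ideal, and for ideals $I,J\subseteq\mathrm{Ann}(\mathfrak{m})$ one has $IJ\subseteq \mathfrak{m}\,\mathrm{Ann}(\mathfrak{m})=(0)$; hence the infinitely many one-dimensional subspaces form an infinite clique, and since $\gamma(K_{n})\to\infty$ this contradicts $\gamma(\mathbb{AG}(R))<\infty$. Thus $\dim_{R/\mathfrak{m}}\mathrm{Ann}(\mathfrak{m})=1$, i.e.\ $R$ is Gorenstein. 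For the lower bound on the embedding dimension, if $d:=\dim_{R/\mathfrak{m}}\mathfrak{m}/\mathfrak{m}^{2}=1$ then Lemma \ref{2.3} (case $n=1$) shows $R$ is a special principal ring, which has only finitely many ideals, contrary to hypothesis; hence $d\geq 2$.

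It remains to prove $d\leq 2$, and this is the heart of the matter. I would argue by contradiction, assuming $d\geq 3$ and exhibiting, in every case, an infinite complete or complete bipartite subgraph. Write $\mathfrak{m}^{t}=(0)\neq\mathfrak{m}^{t-1}$; the Gorenstein hypothesis forces $\mathfrak{m}^{t-1}=\mathrm{Ann}(\mathfrak{m})$ to be one-dimensional, and $t\geq 3$ (if $t=2$ the socle would be all of $\mathfrak{m}$, of dimension $d\geq 3$). If $\mathfrak{m}^{3}=(0)$ then Lemma \ref{2.4} gives $d\leq 2$ at once, a contradiction, so I may assume $\mathfrak{m}^{3}\neq (0)$. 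The main tool now is the inclusion-reversing duality $I\mapsto\mathrm{Ann}(I)$ of a Gorenstein Artinian local ring, together with the identity $\mu(\mathrm{Ann}(I))=\dim_{R/\mathfrak{m}}\mathrm{soc}(R/I)$, where $\mu$ denotes the minimal number of generators. Applying this to $I=\mathfrak{m}^{2}$, and noting $\mathrm{soc}(R/\mathfrak{m}^{2})=\mathfrak{m}/\mathfrak{m}^{2}$, shows that $C:=\mathrm{Ann}(\mathfrak{m}^{2})$ needs $d\geq 2$ generators; as $R/\mathfrak{m}$ is infinite, $C$ therefore contains infinitely many ideals, every one of which annihilates each ideal contained in $\mathfrak{m}^{2}$ (the product lies in $C\mathfrak{m}^{2}=(0)$). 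Consequently, as soon as $\mathfrak{m}^{2}$ contains three distinct nonzero ideals, those three together with the infinite family inside $C$ yield a subgraph containing $K_{n,3}$ for every $n$, whence $\gamma(\mathbb{AG}(R))=\infty$. This settles the cases $\dim_{R/\mathfrak{m}}\mathfrak{m}^{2}/\mathfrak{m}^{3}\geq 2$ (where $\mathfrak{m}^{2}$ already has infinitely many subideals) and $t\geq 5$ (where $\mathfrak{m}^{2}\supsetneq\mathfrak{m}^{3}\supsetneq\mathfrak{m}^{4}\supsetneq (0)$ furnish three ideals).

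The main obstacle is the remaining thin case $t=4$ with $\dim_{R/\mathfrak{m}}\mathfrak{m}^{2}/\mathfrak{m}^{3}=1$, in which Lemma \ref{2.3} forces the only ideals inside $\mathfrak{m}^{2}$ to be $\mathfrak{m}^{2}\supsetneq\mathfrak{m}^{3}\supsetneq(0)$, so the three-ideal construction above is unavailable. Here I would exploit the multiplication pairing $\mathfrak{m}/\mathfrak{m}^{2}\times\mathfrak{m}^{2}/\mathfrak{m}^{3}\to\mathfrak{m}^{3}\cong R/\mathfrak{m}$ into the one-dimensional socle, and the induced symmetric form on $\mathrm{Ann}(\mathfrak{m}^{2})/\mathfrak{m}^{2}$ with values in $\mathfrak{m}^{2}$: a totally isotropic subspace of dimension $\geq 2$ of this form lifts to an ideal $L$ with $L^{2}=(0)$ and $\mu(L)\geq 2$, whose subideals form an infinite clique. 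For the small values of $d$ where a two-dimensional isotropic subspace is not automatic, I would instead adapt the path-counting device used in Lemma \ref{2.4}, building infinitely many internally disjoint paths between two fixed vertices to force infinite genus. Checking that this degenerate configuration genuinely produces the required infinite subgraph --- equivalently, ruling out that a Gorenstein ring of Hilbert function $(1,d,1,1)$ with $d\geq 3$ could have finite genus --- is the delicate technical point on which the whole argument turns.
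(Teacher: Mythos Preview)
Your reduction is cleaner than the paper's in several respects: the infinite clique inside $\mathrm{Ann}(\mathfrak{m})$ when the socle is two-dimensional is more direct than the paper's bipartite bound, and invoking Matlis duality to see that $C=\mathrm{Ann}(\mathfrak{m}^{2})$ has $\mu(C)=d$ (hence infinitely many cyclic subideals over an infinite residue field) is a nice organizing principle that the paper does not use. With that, your $K_{\infty,3}$ arguments for $t\geq 5$ and for $\dim\mathfrak{m}^{2}/\mathfrak{m}^{3}\geq 2$ are correct.

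The gap is precisely where you flag it, the thin case $t=4$ with $\dim\mathfrak{m}^{2}/\mathfrak{m}^{3}=1$, but your proposed fix does not work. The totally isotropic subspace need not exist for \emph{any} $d$, not just small $d$: take $k=\mathbb{R}$ and $R=k[u,v_{1},\dots,v_{d-1}]/(v_iv_j\ (i\neq j),\ v_i^{2}-u^{3},\ uv_i,\ u^{4})$. This is Gorenstein with Hilbert function $(1,d,1,1)$, and the multiplication form on $C/\mathfrak{m}^{2}=\sum kv_i$ is the standard positive-definite form, which is anisotropic. So no ideal $L\supseteq\mathfrak{m}^{2}$ with $\dim L/\mathfrak{m}^{2}\geq 2$ satisfies $L^{2}=0$, and your infinite-clique mechanism is unavailable. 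Falling back on ``adapt Lemma~\ref{2.4}'' is not a proof. The paper resolves this case by a different, elementary device: it selects $y\in\mathfrak{m}\setminus\mathfrak{m}^{2}$ with $y\cdot\mathfrak{m}^{t-1}\neq 0$ (in the example above, $y=u$), shows $|\mathbb{I}(Ry)|\geq 4$ (here $Ru\supsetneq Ru^{2}\supsetneq Ru^{3}\supsetneq 0$), and then observes that $(\mathrm{Ann}(y)+\mathfrak{m}^{2})/\mathfrak{m}^{2}$ has dimension $d-1\geq 2$, so $\mathrm{Ann}(y)$ has infinitely many subideals. Since every subideal of $\mathrm{Ann}(y)$ annihilates every subideal of $Ry$, this yields $K_{\infty,3}$. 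You can graft exactly this step onto your argument to close the gap; the point is that one must pass from $\mathfrak{m}^{2}$ (which has only two nonzero subideals here) to a well-chosen principal ideal $Ry$ that has three.
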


\begin{proof} If $R$ is a field, then  $R$ has finitely many
ideals. Thus we can assume that $R$ is not a field. If
$|R/{\mathfrak{m}}|<\infty$, then one can easily see that $R$ is a
finite ring and so $R$ has finitely many ideals. Thus we can
assume that $|R/{\mathfrak{m}}|=\infty$ and $\gamma(\Bbb{AG}(R))=
g$ for an integer $g \geq 0$.
The proof now proceeds by cases:\\
 {\it Case} 1: ${\it {\rm v.dim}_{R/{\mathfrak{m}}}}({\rm Ann}({\mathfrak{m}})) \geq
 2$. Then $|{\Bbb{I}}(\rm{Ann}({\mathfrak{m}}))|=|\Bbb{I}({\mathfrak{m}})|=\infty$ and since  ${\mathfrak{m}}{\rm Ann}({\mathfrak{m}})=(0)$,
  $K_{|\Bbb{I}({\mathfrak{m}})|-4,3}$ is a subgraph of $\Bbb{AG}(R)$.
 Hence, by the formula for genus of complete bipartite graphs
  $\lceil
  (|\Bbb{I}({\mathfrak{m}})|-6)/4 \rceil \leq g$ and so $|\Bbb{I}({\mathfrak{m}})|\leq 4g+6$, a contradiction.\\
 {\it Case} 2: ${\it {\rm v.dim}_{R/{\mathfrak{m}}}} ({\rm Ann}({\mathfrak{m}})) =1$.
 Since $R$ is an Artinian ring, there exists positive integer $t$
 such that
 ${\mathfrak{m}}^{t+1}=(0)$ and ${\mathfrak{m}}^{t} \neq (0)$.\\
 {\it Subcase} 1:  $t=1$, i.e., ${\mathfrak{m}}^{2}=(0)$. Then
 $K_{|\Bbb{I}({\mathfrak{m}})|-1}$ is a subgraph of $\Bbb{AG}(R)$ and so by the
 formula for genus of complete graphs,
$\lceil(|\Bbb{I}({\mathfrak{m}})|-6)/12\rceil\leq g$. Hence
$|\Bbb{I}({\mathfrak{m}})| \leq 12g+6$, .i.e., $R$ has at most
$12g+7$ ideals.\\
{\it Subcase} 2: $t=2$, i.e., ${\mathfrak{m}}^{2}\neq (0)$ and
${\mathfrak{m}}^{3}=(0)$. Then Lemma \ref{2.4} implies that ${\it
{\rm v.dim}_{R/{\mathfrak{m}}}}
({\mathfrak{m}}/{\mathfrak{m}}^{2})\leq 2$. If ${\it {\rm
v.dim}_{R/{\mathfrak{m}}}} ({\mathfrak{m}}/{\mathfrak{m}}^{2})=1$,
then Lemma \ref{2.3}
implies that $|\Bbb{I}({\mathfrak{m}})|=3$,  .i.e., $R$ has four ideals.\\
 {\it Subcase} 3:   $t \geq 3$.
 Since ${\mathfrak{m}}^{3}{\mathfrak{m}}^{t}={\mathfrak{m}}^{3}{\mathfrak{m}}^{t-1}={\mathfrak{m}}^{3}{\mathfrak{m}}^{t-2}=(0)$,
$K_{|\Bbb{I}({\mathfrak{m}}^{3})|,3}$ is a subgraph of
$\Bbb{AG}(R)$. So by the
 formula for genus of complete bipartite graphs,
$\lceil(|\Bbb{I}({\mathfrak{m}}^{3})|-6)/12\rceil\leq g$. Hence,
$|\Bbb{I}({\mathfrak{m}}^{3})| \leq 12g+6$. If ${\it {\rm
v.dim}_{R/{\mathfrak{m}}}}({\mathfrak{m}}^{t-1}/{\mathfrak{m}}^{t})\geq
2$, then Remark \ref{2.2} implies that
$|\Bbb{I}({\mathfrak{m}}^{t-1})|=\infty$.
 Since
${\mathfrak{m}}^{t-1}{\mathfrak{m}}^{t-1}=(0)$ and $t\geq 3$,
$K_{|\Bbb{I}({\mathfrak{m}}^{t-1})|-1}$ is a subgraph of
$\Bbb{AG}(R)$. Therefore,  $\gamma(\Bbb{AG}(R)) = \infty$, a
contradiction. Thus ${\it {\rm
v.dim}_{R/{\mathfrak{m}}}}({\mathfrak{m}}^{t-1}/{\mathfrak{m}}^{t})=
1$. Hence, by Lemma \ref{2.3}, there exists
$x \in {\mathfrak{m}}^{t-1}$ such that ${\mathfrak{m}}^{t-1}= Rx$.\\
Suppose that ${\rm {\rm
v.dim}}_{R/{\mathfrak{m}}}{\mathfrak{m}}/{\mathfrak{m}}^{2}=n\geq
3$. Since $Rx\cong R/{\rm Ann}(x)$ and $Rx$ has only one
 nonzero proper $R$-submodule,  ${\mathfrak{m}}/{\rm{Ann}}(x)$ is the only  nonzero proper ideal of $R/{\rm Ann}(x)$.
If ${\mathfrak{m}}^{2}\nsubseteq {\rm Ann}(y)$, then ${\rm
Ann}(x)+{\mathfrak{m}}^{2}={\mathfrak{m}}$, and  by Nakayama's
lemma, ${\rm Ann}(x)={\mathfrak{m}}$, a contradiction. Thus
${\mathfrak{m}}^{2}\subseteq {\rm Ann}(x)$ and since
${\mathfrak{m}}/{\rm Ann}(x)$ is the only nonzero proper ideal of
$R/{\rm Ann}(x)$, ${\rm v.dim}_{R/{\mathfrak{m}}}{\rm
Ann}(x)/{\mathfrak{m}}^{2}=n-1$. Let
$\{y_{1}+{\mathfrak{m}}^{2},y_{2}+{\mathfrak{m}}^{2},...,y_{n-1}+{\mathfrak{m}}^{2},y+{\mathfrak{m}}^{2}\}$
be a basis of ${\mathfrak{m}}/{\mathfrak{m}}^{2}$ such that
$\{y_{1}+{\mathfrak{m}}^{2},y_{2}+{\mathfrak{m}}^{2},...,y_{n-1}+{\mathfrak{m}}^{2}\}$
be a basis of
${\rm Ann}(x)/{\mathfrak{m}}^{2}$. Since ${\mathfrak{m}}^{t}$ is the only minimal ideal of $R$, ${\mathfrak{m}}^{t}\subseteq Ry$ and $|\Bbb{I}(Ry)|\geq 3$. \\
Suppose that  $|\Bbb{I}(Ry)|=3$. Since $Ry\cong R/{\rm Ann}(y)$
and $Ry$ has only one
 nonzero proper $R$-submodule,  ${\mathfrak{m}}/{\rm{Ann}}(y)$ is the only  nonzero proper ideal of $R/{\rm Ann}(y)$.
If ${\mathfrak{m}}^{2}\nsubseteq {\rm Ann}(y)$, then ${\rm
Ann}(y)+{\mathfrak{m}}^{2}={\mathfrak{m}}$, and thus by Nakayama's
lemma, ${\rm Ann}(y)={\mathfrak{m}}$, a contradiction. Therefore,
${\mathfrak{m}}^{2}\subseteq {\rm Ann}(y)$. So
${\mathfrak{m}}^{t-1}\subseteq {\rm Ann}(y)$. Hence,
${\mathfrak{m}}^{t-1}{\mathfrak{m}}=(0)$
(${\mathfrak{m}}^{t-1}(Ry)=(0)$ and
${\mathfrak{m}}^{t-1}(Ry_{1}+...+Ry_{n-1})=(0)$), a contradiction.
Therefore,
$|\Bbb{I}(Ry)|\geq 4$. \\
Assume $|\Bbb{I}(Ry)|<\infty$. Since $Ry\cong R/{\rm Ann}(y)$,
${\rm v.dim}_{R/{\mathfrak{m}}}({\rm
Ann}(y)+{\mathfrak{m}}^{2})/{\mathfrak{m}}^{2}=n-1$. Thus
$|\Bbb{I}({\rm Ann}(y))|=\infty$
 and $K_{|\Bbb{I}({\rm Ann}(y))|,|\Bbb{I}(Ry)|}$ is a subgraph of $\Bbb{AG}(R)$.
 So by the formula for genus of complete bipartite graphs, $\gamma(\Bbb{AG}(R))=\infty$.\\
 So we now assume that $|\Bbb{I}(Ry)|=\infty$.
 Suppose  that ${\rm v.dim}_{R/{\mathfrak{m}}}{\mathfrak{m}}^{2}/{\mathfrak{m}}^{3}\geq
 2$.  If $|\Bbb{I}({\rm Ann}(y))|\geq 4$, then $K_{|\Bbb{I}(Ry)|,3}$ is a subgraph of
 $\Bbb{AG}(R)$ and so by the formula for genus of complete bipartite
 graphs,
  $\gamma(\Bbb{AG}{R})=\infty$, a contradiction. We may assume that $|\Bbb{I}({\rm Ann}(y))|=3$.
  ${\rm Ann}(y)\nsubseteq {\mathfrak{m}}^{3}$ (since ${\mathfrak{m}}^{2}y\cong {\mathfrak{m}}^{2}/({\rm Ann}(y)\cap {\mathfrak{m}}^{2})$, there exist
finitely many
 ideals between ${\rm Ann}(y)$ and ${\mathfrak{m}}^{2}$). Therefore,  there exists $z\in {\rm Ann}(y)\setminus
 {\mathfrak{m}}^{3}$.  Since $|\Bbb{I}({\rm Ann}(y))|=3$, $Rz={\rm Ann}(y)$. Since $Rz\cong R/{\rm Ann}(z)$ and
$Rz$ has only one
 nonzero proper $R$-submodule,  ${\mathfrak{m}}/{\rm{Ann}}(z)$ is the only  nonzero proper ideal of $R/{\rm Ann}(z)$.
If ${\mathfrak{m}}^{2}\nsubseteq {\rm Ann}(z)$, then ${\rm
Ann}(z)+{\mathfrak{m}}^{2}={\mathfrak{m}}$, and thus by Nakayama's
lemma, ${\rm Ann}(z)={\mathfrak{m}}$, a contradiction. Therefore,
${\mathfrak{m}}^{2}\subseteq {\rm Ann}(z)={\rm Ann}({\rm
Ann}(y))$. Since ${\mathfrak{m}}^{2}{\rm Ann}(y)=(0)$,
${\mathfrak{m}}^{2}{\mathfrak{m}}^{t-1}=(0)$ and
 ${\mathfrak{m}}^{t}{\mathfrak{m}}^{2}=(0)$, $K_{|\Bbb{I}({\mathfrak{m}}^{2})|,3}$ is a subgraph of
 $\Bbb{AG}(R)$. So by the formula for genus of complete bipartite
 graphs,
  $\gamma(\Bbb{AG}(R))=\infty$, a contradiction. Therefore,
 ${\rm v.dim}_{R/{\mathfrak{m}}}{\mathfrak{m}}^{2}/{\mathfrak{m}}^{3}=1$ and by Lemma
 \ref{2.3},
 $|\Bbb{I}({\mathfrak{m}}^{2})|<\infty$.
 Since $my\cong {\mathfrak{m}}/{\rm Ann}(y)$ and $|\Bbb{I}({\mathfrak{m}}y)|<\infty$,
 ${\rm v.dim}_{R/{\mathfrak{m}}}({\rm Ann}(y)+{\mathfrak{m}}^{2})/{\mathfrak{m}}^{2}=n-1$. So $|\Bbb{I}({\rm Ann}(y))|=\infty$. Hence, $K_{|\Bbb{I}({\rm Ann}(y))|, |\Bbb{I}(Ry)|}$ is a subgraph
 of $\Bbb{AG}(R)$ and by the formula for genus of complete bipartite
 graphs,
  $\gamma(\Bbb{AG}(R))=\infty$, a contradiction. Thus ${\rm{v.dim}}_{R/{\mathfrak{m}}}{\mathfrak{m}}/{\mathfrak{m}}^{2}\leq 2$.\\
 Also, if  ${\rm v.dim}_{R/{\mathfrak{m}}}{\mathfrak{m}}/{\mathfrak{m}}^{2}=1$, then  by Lemma
\ref{2.3}, $R$ has finitely many ideals.\hfill $\square$
 \end{proof}

\begin{llem}\label{2.6}  Let $R=\prod_{i\in I}R_i$ be a product of nonzero rings $R_i$ $(i \in
I)$. If  $\gamma(\Bbb{AG}(R)) < \infty$, then $I$ is finite and
for each $i\in I$, $\gamma(\Bbb{AG}(R_i)) < \infty$. Consequently,
if  $R$ is  Artinian, then  $\gamma(\Bbb{AG}(R)) < \infty$ if and
only if $\gamma(\Bbb{AG}(R_{i})) < \infty$ for every $i\in I$.
\end{llem}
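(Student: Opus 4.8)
The plan is to prove the two assertions of the first sentence by monotonicity of genus under subgraphs, and then to read off the ``Consequently'' part. For each $i\in I$ let $\mathfrak{a}_i$ be the ideal of $R$ that equals $R_i$ in the $i$-th coordinate and $(0)$ in every other coordinate. For $i\neq j$ one has $\mathfrak{a}_i\mathfrak{a}_j=(0)$, and each $\mathfrak{a}_i$ is nonzero, so every $\mathfrak{a}_i$ is an annihilating-ideal and the distinct vertices $\{\mathfrak{a}_i\}_{i\in I}$ are pairwise adjacent; they therefore span a copy of $K_{|I|}$ inside $\Bbb{AG}(R)$. If $I$ were infinite this subgraph would contain $K_n$ for every $n$, and since $\gamma(K_n)=\lceil (n-3)(n-4)/12\rceil\to\infty$ we would get $\gamma(\Bbb{AG}(R))=\infty$. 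Hence $\gamma(\Bbb{AG}(R))<\infty$ forces $|I|<\infty$.

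Next I would handle the individual factors. Fix $i\in I$; we may assume $|I|\ge 2$, since otherwise $R=R_i$ and there is nothing to prove. Send an ideal $J$ of $R_i$ to the ideal $\phi_i(J)$ of $R$ that equals $J$ in coordinate $i$ and $(0)$ elsewhere. This map is injective, and since $\phi_i(J)\phi_i(J')=\phi_i(JJ')$ it sends products to products; in particular $\phi_i(J)\phi_i(J')=(0)$ if and only if $JJ'=(0)$. Moreover, if $J\neq(0)$ is an annihilating-ideal of $R_i$ then $\phi_i(J)\neq(0)$ and $\phi_i(J)\mathfrak{a}_j=(0)$ for any $j\neq i$, so $\phi_i(J)$ is a vertex of $\Bbb{AG}(R)$. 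Thus $\phi_i$ identifies $\Bbb{AG}(R_i)$ with an induced subgraph of $\Bbb{AG}(R)$, and therefore $\gamma(\Bbb{AG}(R_i))\le \gamma(\Bbb{AG}(R))<\infty$.

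For the ``Consequently'' part, when $R$ is Artinian it is a finite product of Artinian local rings, so $I$ is automatically finite, and the forward implication of the stated equivalence is exactly the first sentence applied to that decomposition. The reverse implication --- that finiteness of $\gamma(\Bbb{AG}(R_i))$ for every $i$ forces $\gamma(\Bbb{AG}(R))<\infty$ --- is the step I expect to be the main obstacle, precisely because the subgraph--monotonicity used above is one-directional and gives nothing here. To attack it I would first invoke Theorem~\ref{2.5} to pin down each local factor: either $R_i$ has only finitely many ideals, or $R_i$ is Gorenstein with ${\rm v.dim}_{R_i/\mathfrak{m}_i}(\mathfrak{m}_i/\mathfrak{m}_i^{2})=2$. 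One then has to bound the genus of the whole product from the (finitely many) factors simultaneously, and the truly delicate point is the interaction between a factor with infinitely many ideals and the remaining factors: a socle-type ideal $(\mathfrak{m}_i^{t},0,\dots)$, together with idempotent ideals supported off coordinate $i$, can be adjacent to an entire infinite family $\{(I,0,\dots)\}$ of ideals coming from $R_i$, threatening to produce a $K_{3,\infty}$ and hence infinite genus.

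Consequently, the real content of the reverse implication is to verify that, under the Artinian hypothesis, such $K_{3,\infty}$-type configurations cannot survive, and this is where I would expect the genuine work (and the genuine risk) to lie. I would carry out that verification by a case analysis on the two types of local factor supplied by Theorem~\ref{2.5}, treating the factors with only finitely many ideals together and handling separately the at most one Gorenstein factor that can carry infinitely many ideals, so as to confirm that the embeddings of the various $\Bbb{AG}(R_i)$ assemble into a graph of finite genus rather than generating an unbounded complete bipartite subgraph.
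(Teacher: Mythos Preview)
Your treatment of the first sentence is correct and close to the paper's: you exhibit the clique $K_{|I|}$ on the coordinate ideals $\mathfrak a_i$, while the paper instead builds a bipartite $K_{|\Bbb{I}(I_1)|,|\Bbb{I}(I_2)|}$ from $I_1=R_{i_1}\times R_{i_2}\times 0\times\cdots$ and $I_2=0\times 0\times R_{i_3}\times\cdots$; either argument forces $|I|<\infty$, and the embedding of each $\Bbb{AG}(R_i)$ as a subgraph of $\Bbb{AG}(R)$ is handled the same way in both.

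The reverse implication in the ``Consequently'' clause is where both you and the paper run aground, and your own diagnosis pinpoints why. The $K_{3,\infty}$ configuration you fear \emph{does} occur, so the implication is false as stated and your proposed verification cannot be completed. Take $R_1=k[x,y]/(x^2,y^2)$ over an infinite field $k$: this is Artinian local Gorenstein with ${\rm v.dim}_{R_1/\mathfrak m}(\mathfrak m/\mathfrak m^2)=2$ and infinitely many ideals, yet $\Bbb{AG}(R_1)$ is a star with center $(xy)$ together with a matching among its leaves, hence planar. With $R_2=k$ one has $\gamma(\Bbb{AG}(R_1))=\gamma(\Bbb{AG}(R_2))=0$, but in $R=R_1\times R_2$ the three ideals $0\times k$, $(xy)\times 0$, $(xy)\times k$ are each adjacent to $J\times 0$ for every nonzero proper ideal $J\subseteq\mathfrak m$ of $R_1$, so $K_{3,\infty}\subseteq\Bbb{AG}(R)$ and $\gamma(\Bbb{AG}(R))=\infty$.

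The paper's argument collides with the same wall: assuming some factor $R_1$ has infinitely many ideals, it constructs exactly this $K_{3,\infty}$ (via $0\times R_2\times\cdots$, ${\rm Ann}(\mathfrak m_1)\times 0\times\cdots$, ${\rm Ann}(\mathfrak m_1)\times R_2\times\cdots$), deduces $\gamma(\Bbb{AG}(R))=\infty$, and declares ``a contradiction'' --- but nothing in the hypothesis that every $\gamma(\Bbb{AG}(R_i))<\infty$ is contradicted. Read charitably, what that computation actually establishes (and what Theorem~\ref{2.7} later invokes) is a sharpening of the \emph{forward} direction: if $R$ is Artinian with at least two local factors and $\gamma(\Bbb{AG}(R))<\infty$, then every factor has only finitely many ideals. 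You should aim for that statement rather than the ``if and only if''.
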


\begin{proof} Suppose, contrary to our claim, that $|I|=\infty$.
Let $I_{1}=(R_{i_{1}}\times R_{i_{2}}\times 0 \times ...\times 0)$
and $I_{2}=(0\times 0\times R_{i_{3}}\times R_{i_{4}}\times ...)$.
Since $I_{1}I_{2}=(0)$, $K_{|\Bbb{I}(I_{1})|,|\Bbb{I}(I_{2})|}$ is
a subgraph of $\Bbb{AG}(R)$. Since $|\Bbb{I}(I_{1})|\geq 4$ and
$|\Bbb{I}(I_{2})|=\infty$, by the formula for genus of complete
bipartite graphs, $\gamma(\Bbb{AG}(R)) = \infty$, a contradiction.
Therefore,  $|I|<\infty$ and since $\Bbb{AG}(R_{i})$ is a subgraph
of $\Bbb{AG}(R)$ for every $i\in I$ and $\gamma(\Bbb{AG}(R)) <
\infty$, we conclude that
 $\gamma(\Bbb{AG}(R_{i}))<\infty$ for every $i\in I$.\\
Now,   suppose that $R$ is an Artinian ring.  It is well known
that $R \cong R_{1} \times \ldots \times R_{n}$ for some positive
integer $n$, where every $R_{i}$  $(i=1,2,...,n)$ is an Artinian
 local ring. If $n=1$, then the proof is immediate. Now, we may
assume that $n\geq 2$. If every $R_{i}$ has finitely many ideals,
then $R$ has finitely many ideals and therefore
$\gamma(\Bbb{AG}(R))<\infty$.  Without loss of generality, we can
assume that $R_{1}$ has infinitely many ideals. Let
$I_{1}=(0\times R_{2}\times 0 \times ...\times 0)$, $I_{2}=({\rm
Ann}({\mathfrak{m}}_{1})\times 0\times ...\times 0)$ and
$I_{3}=({\rm Ann}({\mathfrak{m}}_{1})\times R_{2}\times 0\times
...\times 0)$. Then for every ideal $J$ of $R_{1}$, we have
 $I_{i}(J\times 0\times...\times 0)=(0)$. Therefore,  $K_{|\Bbb{I}({\mathfrak{m}}_{1})|-1,3}$ is a subgraph
of $\Bbb{AG}(R)$. Since $|\Bbb{I}({\mathfrak{m}}_{1})|=\infty$, by
the formula for genus of complete bipartite graphs,
$\gamma(\Bbb{AG}(R))=\infty$, a contradiction. Therefore,  every
$R_{i}$ has finitely many ideals and $\gamma(\Bbb{AG}(R))<\infty$.
\hfill $\square$
\end{proof}

\begin{ttheo}\label{2.7}
 Let $R$ be an Artinian ring. If
$\gamma(\Bbb{AG}(R)) < \infty$, then $R$ has finitely many ideals
or $(R,{\mathfrak{m}})$ is a Gorenstein ring with ${\rm
v.dim}_{R/{\mathfrak{m}}}{\mathfrak{m}}/{\mathfrak{m}}^{2}=2$.
\end{ttheo}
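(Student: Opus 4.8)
The plan is to reduce Theorem \ref{2.7} to the local case already settled in Theorem \ref{2.5}, using the Artinian structure theorem together with Lemma \ref{2.6}. Since $R$ is Artinian, it decomposes as $R \cong R_{1} \times \cdots \times R_{n}$ for some positive integer $n$, where each $(R_{i},\mathfrak{m}_{i})$ is an Artinian local ring. By Lemma \ref{2.6}, the hypothesis $\gamma(\Bbb{AG}(R)) < \infty$ forces $\gamma(\Bbb{AG}(R_{i})) < \infty$ for every $i$, so each factor already satisfies the local hypotheses.

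If $n = 1$, then $R$ is itself an Artinian local ring, and the conclusion is exactly Theorem \ref{2.5}: either $R$ has finitely many ideals, or $R$ is a Gorenstein ring with ${\rm v.dim}_{R/\mathfrak{m}}\mathfrak{m}/\mathfrak{m}^{2}=2$. Thus the only remaining case is $n \geq 2$, and here I would argue that $R$ must have finitely many ideals, so that the Gorenstein alternative cannot occur for a genuine product (which is consistent with the theorem, since the second alternative presupposes $R$ local). Suppose to the contrary that some factor, say $R_{1}$, has infinitely many ideals. This is precisely the configuration analyzed in the proof of Lemma \ref{2.6}: taking $I_{1}=(0 \times R_{2} \times 0 \times \cdots \times 0)$, $I_{2}=({\rm Ann}(\mathfrak{m}_{1}) \times 0 \times \cdots \times 0)$, and $I_{3}=({\rm Ann}(\mathfrak{m}_{1}) \times R_{2} \times 0 \times \cdots \times 0)$, one checks that each $I_{j}$ annihilates every ideal of the form $J \times 0 \times \cdots \times 0$ with $J$ an ideal of $R_{1}$. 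Since $R_{1}$ has infinitely many ideals we have $|\Bbb{I}(\mathfrak{m}_{1})| = \infty$, so $K_{|\Bbb{I}(\mathfrak{m}_{1})|-1,\,3}$ embeds as a subgraph of $\Bbb{AG}(R)$; by the genus formula for complete bipartite graphs this gives $\gamma(\Bbb{AG}(R)) = \infty$, a contradiction. Hence every $R_{i}$ has finitely many ideals, and therefore $R \cong R_{1} \times \cdots \times R_{n}$ has finitely many ideals.

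The argument is short precisely because the substantive content is front-loaded: the local classification in Theorem \ref{2.5} and the product estimate in Lemma \ref{2.6} do all the real work. The one point requiring care is that in the multifactor case $n \geq 2$ I need slightly more than the literal ``iff'' in the statement of Lemma \ref{2.6}; I need the sharper conclusion visible inside its proof, namely that finiteness of the genus of a nontrivial product forces each local factor, and hence the whole ring, to have only finitely many ideals. With that in hand, the Gorenstein alternative is confined to the genuinely local situation $n=1$, which is exactly what Theorem \ref{2.7} asserts.
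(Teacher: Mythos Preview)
Your proof is correct and follows essentially the same approach as the paper: decompose $R$ as a product of Artinian local rings, invoke Theorem \ref{2.5} when $n=1$, and for $n\geq 2$ use the argument from the proof of Lemma \ref{2.6} (with the same ideals $I_1,I_2,I_3$) to force every factor, hence $R$, to have finitely many ideals. Your explicit remark that you need the internal argument of Lemma \ref{2.6} rather than its stated equivalence is exactly what the paper does when it writes ``as in the proof of Lemma \ref{2.6}.''
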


\begin{proof}
 Let $R$ be an Artinian ring. It is well known that $R
\cong R_{1} \times \ldots\times R_{n}$ for some positive integer
$n$, where every $R_{i}$  $(i=1,2,...,n)$ is an Artinian
 local ring and addition and multiplication in the
product are defined component wise. If $R$ is a local ring, then
by Theorem \ref{2.5}, $R$ has finitely many ideals or ${\rm
v.dim}_{R/{\mathfrak{m}}}{\mathfrak{m}}/{\mathfrak{m}}^{2}=2$. If
$n\geq 2$, then as in the proof of Lemma \ref{2.6}, every $R_{i}$
has finitely many ideals and therefore $R$ has finitely many
ideals. \hfill $\square$
\end{proof}

\begin{llem}\label{2.8}
 Let $(R,\mathfrak{m})$ be a Gorenstein ring, $k$ be a positive integer  and $q$ be a prime power.
 If ${\mathfrak{m}}^{2} \neq (0)$, ${\mathfrak{m}}^{3} = (0)$, $|R/{\mathfrak{m}}| = q$ and ${\it {\rm
v.dim}_{R/{\mathfrak{m}}}}({\mathfrak{m}}/{\mathfrak{m}}^{2})=k
>6$, then $\Bbb{AG}(R)$ contains a copy of $K_{k-6,3}$.
\end{llem}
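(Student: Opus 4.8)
The plan is to encode the multiplication on $\mathfrak{m}/\mathfrak{m}^2$ as a symmetric bilinear form and read off the desired complete bipartite subgraph purely from orthogonality. First I would record the structural facts. Since $\mathfrak{m}^3=(0)$ with $R$ Noetherian local, $|R/\mathfrak{m}|=q$ finite and $\dim_{R/\mathfrak{m}}\mathfrak{m}/\mathfrak{m}^2=k<\infty$, the ring $R$ is finite (indeed $|R|=q^{k+2}$), hence Artinian, so by Lemma \ref{2.1} every nonzero proper ideal is a vertex of $\Bbb{AG}(R)$. From $\mathfrak{m}\cdot\mathfrak{m}^2=\mathfrak{m}^3=(0)$ we get $\mathfrak{m}^2\subseteq{\rm Ann}(\mathfrak{m})$; the Gorenstein hypothesis gives ${\rm v.dim}_{R/\mathfrak{m}}{\rm Ann}(\mathfrak{m})=1$, and as $\mathfrak{m}^2\neq(0)$ this forces $\mathfrak{m}^2={\rm Ann}(\mathfrak{m})$ with ${\rm v.dim}_{R/\mathfrak{m}}\mathfrak{m}^2=1$. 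Writing $V=\mathfrak{m}/\mathfrak{m}^2$ (a $k$-dimensional $R/\mathfrak{m}$-space), multiplication descends to a symmetric $R/\mathfrak{m}$-bilinear map $B\colon V\times V\to\mathfrak{m}^2$, $B(\bar x,\bar y)=xy$, which is well defined precisely because $\mathfrak{m}^2\cdot\mathfrak{m}=(0)$.

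Next I would set up a dictionary between subspaces and ideals. For a subspace $W\subseteq V$ let $I_W$ denote its full preimage under $\mathfrak{m}\to V$; this is an ideal with $\mathfrak{m}^2\subseteq I_W\subseteq\mathfrak{m}$, and $W\mapsto I_W$ is injective. The key claim is that $I_WI_{W'}=(0)$ if and only if $B(W,W')=0$, i.e. $W\perp W'$ for the form $B$. This follows from a one-line computation: for $x\in I_W$ and $y\in I_{W'}$, writing $x=x_0+a$ and $y=y_0+b$ with $a,b\in\mathfrak{m}^2$, the cross terms $x_0b,\ ay_0,\ ab$ all lie in $\mathfrak{m}^3=(0)$, so $xy=x_0y_0=B(\bar x,\bar y)$.

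Finally I would build the graph. Choose $u,v,s\in\mathfrak{m}$ whose images $\bar u,\bar v,\bar s$ are linearly independent in $V$ (possible since $k\geq 2$), set $P=\langle\bar u,\bar v,\bar s\rangle$ and $U=P^{\perp}$. For any bilinear form one has the unconditional inequality $\dim_{R/\mathfrak{m}}W^{\perp}\geq k-\dim_{R/\mathfrak{m}}W$, so $\dim_{R/\mathfrak{m}}U\geq k-3$ and $U$ contains at least $\frac{q^{k-3}-1}{q-1}=1+q+\cdots+q^{k-4}\geq k-3$ distinct one-dimensional subspaces. Deleting the at most three lines $\langle\bar u\rangle,\langle\bar v\rangle,\langle\bar s\rangle$ leaves at least $k-6\ (\geq 1,\text{ as }k>6)$ distinct lines $\langle\bar w_1\rangle,\dots,\langle\bar w_{k-6}\rangle$ inside $U$. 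Take the two parts $\{I_{\langle\bar w_i\rangle}\}_{i=1}^{\,k-6}$ and $\{I_{\langle\bar u\rangle},I_{\langle\bar v\rangle},I_{\langle\bar s\rangle}\}$. Each $\bar w_i\in U=P^{\perp}$ is orthogonal to $\bar u,\bar v,\bar s$, so by the key claim every $I_{\langle\bar w_i\rangle}$ is adjacent to all three of $I_{\langle\bar u\rangle},I_{\langle\bar v\rangle},I_{\langle\bar s\rangle}$; since $W\mapsto I_W$ is injective and the chosen lines are pairwise distinct and distinct from $\langle\bar u\rangle,\langle\bar v\rangle,\langle\bar s\rangle$, all $(k-6)+3$ ideals are distinct nonzero proper ideals, hence vertices. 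This exhibits a copy of $K_{k-6,3}$ in $\Bbb{AG}(R)$.

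The main obstacle is not any single deep step but the bookkeeping: getting the vanishing criterion exactly right (its whole content is that the error terms fall into $\mathfrak{m}^3$), and counting carefully enough to guarantee $k-6$ genuinely distinct vertices on the large side after removing the three that sit on the small side. It is worth noting that nondegeneracy of $B$ (which does hold, since ${\rm Ann}(\mathfrak{m})=\mathfrak{m}^2$) is convenient but not actually required here, because the inequality $\dim_{R/\mathfrak{m}}W^{\perp}\geq k-\dim_{R/\mathfrak{m}}W$ holds for every bilinear form.
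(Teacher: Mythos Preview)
Your proof is correct. The underlying content coincides with the paper's---products of ideals between $\mathfrak{m}^2$ and $\mathfrak{m}$ are governed by the multiplication pairing on $\mathfrak{m}/\mathfrak{m}^2$---but your packaging is different and cleaner. The paper does not name the bilinear form; it obtains the codimension of ${\rm Ann}(x_i)/\mathfrak{m}^2$ by a cardinality count ($|R|=q^{k+2}$, $|Rx_1|=q^2$, hence $|{\rm Ann}(x_1)|=q^k$), takes the three-vertex side to be $\{Rx_1,Rx_2,\mathfrak{m}^2\}$ (two cyclic ideals plus the socle, which kills all of $\mathfrak{m}$), and populates the large side with cyclic ideals coming from a basis of $({\rm Ann}(x_1)\cap{\rm Ann}(x_2))/\mathfrak{m}^2$. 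You instead take three line-ideals on the small side and lines in the orthogonal complement of a three-dimensional subspace on the large side; your inequality $\dim W^{\perp}\ge k-\dim W$ replaces the counting and would work verbatim over any residue field. (Incidentally, the paper's count contains a harmless arithmetic slip: it records $\dim_{R/\mathfrak{m}}{\rm Ann}(x_i)/\mathfrak{m}^2=k-2$, whereas the numbers actually give $k-1$, which only strengthens the conclusion.) One trivial correction to your write-up: choosing three independent vectors requires $k\geq 3$, not $k\geq 2$; this is of course guaranteed by $k>6$.
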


\begin{proof}
Since ${\it {\rm v.dim}_{R/{\mathfrak{m}}}}
({\mathfrak{m}}/{\mathfrak{m}}^{2}) = k$ and
$|R|=|R/{\mathfrak{m}}||{\mathfrak{m}}/{\mathfrak{m}}^{2}||{\mathfrak{m}}^{2}|$,
we conclude that  $|R| = q^{k+2}$. Let
$\{x_{1}+{\mathfrak{m}}^{2},...,x_{k}+{\mathfrak{m}}^{2}\}$ be a
basis for ${\mathfrak{m}}/{\mathfrak{m}}^{2}$ over
$R/{\mathfrak{m}}$. We have $R/{\rm Ann}(x_{1}) \cong Rx_{1}$ thus
$|R| = |Rx_{1}||{\rm Ann}(x_{1})|$. Since ${\mathfrak{m}}^{2}
\subseteq Rx_{1} $, $|Rx_{1}| = |{\mathfrak{m}}^{2}||q|=q^{2}$ and
$|R| = q^{2} |{\rm Ann}(x_{1})|$. Therefore,  $|{\rm Ann}(x_{1})|
= q^{k}$ and ${\it {\rm v.dim}_{R/{\mathfrak{m}}}} ({\rm
Ann}(x_{1})/{\mathfrak{m}}^{2})= k-2$. Similarly ${\it {\rm
v.dim}_{R/{\mathfrak{m}}}} ({\rm Ann}(x_{2})/{\mathfrak{m}}^{2})=
k-2$. Thus ${\it {\rm v.dim}_{R/{\mathfrak{m}}}}(({\rm
Ann}(x_{1})/{\mathfrak{m}}^{2}) \cap ({\rm
Ann}(x_{2})/{\mathfrak{m}}^{2})) \geq k-4$. Let
$\{x_{i_{1}}+{\mathfrak{m}}^{2},...,x_{i_{n}}+{\mathfrak{m}}^{2}\}$
be a basis of $({\rm Ann}(x_{1})/{\mathfrak{m}}^{2}) \cap ({\rm
Ann}(x_{2})/{\mathfrak{m}}^{2})$. Suppose that $V_{1}=\{
Rx_{1},Rx_{2},{\mathfrak{m}}^{2}\}$ and
$V_{2}=\{Rx_{i_{1}},...,R_{x_{i_{n}}}\}$. Since
$K_{|V_{1}|,|V_{2}|}$ is a subgraph of $\Bbb{AG}(R)$, $K_{k-6,3}$
is a subgraph of  $\Bbb{AG}(R)$. \hfill $\square$
\end{proof}

\begin{llem}\label{2.9}
 Let $(R,\mathfrak{m})$ be an Artinian local ring with $|R/{\mathfrak{m}}|=q$
such that $R$ has finitely many ideals. Then
$|{\mathfrak{m}}^{i}|\leq q^{\Bbb{I}({\mathfrak{m}}^{i})}$ for all
$i$ and $|R|\leq q^{\Bbb{I}(R)}$
\end{llem}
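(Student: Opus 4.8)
The plan is to reduce the whole statement to a length count, using that the residue field is finite. First I would observe that since $(R,\mathfrak{m})$ is Artinian local with $|R/\mathfrak{m}|=q<\infty$, $R$ is a finite ring and every ideal $J$ of $R$ has finite length as an $R$-module; moreover the unique simple $R$-module is $R/\mathfrak{m}\cong\Bbb{F}_q$, so every composition factor of $J$ has cardinality exactly $q$. Consequently, writing $n=\ell_R(J)$ for the length of $J$ and taking a composition series $(0)=J_0\subsetneq J_1\subsetneq\cdots\subsetneq J_n=J$, each quotient $J_t/J_{t-1}$ has order $q$, whence $|J|=\prod_{t=1}^{n}|J_t/J_{t-1}|=q^{n}$.

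Next I would relate the length $n$ to $|\Bbb{I}(J)|$. Since $J\subseteq R$, every $R$-submodule of $J$ is an ideal of $R$ contained in $J$, i.e. $J_t\in\Bbb{I}(J)$ for each $t$. The $n+1$ ideals $J_0,J_1,\dots,J_n$ are pairwise distinct, so $|\Bbb{I}(J)|\geq n+1$. Because $q\geq 2$, this yields $|J|=q^{n}\leq q^{\,|\Bbb{I}(J)|}$. Specializing $J=\mathfrak{m}^{i}$ gives $|\mathfrak{m}^{i}|\leq q^{\,|\Bbb{I}(\mathfrak{m}^{i})|}$ for every $i$, and specializing $J=R$ gives $|R|\leq q^{\,|\Bbb{I}(R)|}$, as desired.

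I do not expect a genuine obstacle here; the argument is essentially a packaging of standard facts. The only two points that require a word of care are that the $R$-submodules of the ideal $J$ are exactly the ideals of $R$ contained in $J$ (so that a composition series of $J$ really does produce elements of $\Bbb{I}(J)$), and that over a local ring every composition factor is isomorphic to the residue field, which is what forces $|J|$ to be precisely a power of $q$ with exponent equal to the length. Both are routine, and the hypothesis that $R$ has finitely many ideals is in fact automatic once $q$ is finite, since then $R$ itself is finite.
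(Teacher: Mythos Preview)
Your argument is correct and is essentially the same as the paper's: both compute $|J|=q^{\ell_R(J)}$ and then bound the length by the number of ideals contained in $J$. The only cosmetic difference is that the paper uses the explicit filtration $\mathfrak{m}^{i}\supseteq\mathfrak{m}^{i+1}\supseteq\cdots\supseteq\mathfrak{m}^{t}\supseteq(0)$ and writes the length as $k_i+\cdots+k_t$ with $k_j={\rm v.dim}_{R/\mathfrak{m}}(\mathfrak{m}^{j}/\mathfrak{m}^{j+1})$, whereas you pass directly to a composition series; the underlying inequality $\ell_R(J)<|\Bbb{I}(J)|$ is identical.
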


\begin{proof} If $R$ is a field, then the proof is
straightforward. Thus we can assume that $R$ is not a field.
 Since $R$ is an Artinian ring, there exists positive
integer $t$ such that ${\mathfrak{m}}^{t}\neq (0)$ and
${\mathfrak{m}}^{t+1} =(0)$. Let ${\it {\rm
v.dim}_{R/{\mathfrak{m}}}}({\mathfrak{m}}^{i}/{\mathfrak{m}}^{i+1})=k_{i}$
where $1 \leq i\leq t$. Since
$|{\mathfrak{m}}^{i}|=|{\mathfrak{m}}^{i}/{\mathfrak{m}}^{i+1}||{\mathfrak{m}}^{i+1}/{\mathfrak{m}}^{i+2}|...|{\mathfrak{m}}^{t}|$
and $k_{i}+...+k_{t} \leq |\Bbb{I}({\mathfrak{m}}^{i})|$,
$|{\mathfrak{m}}^{i}|=q^{k_{i}+...+k_{t}} \leq
q^{\Bbb{I}({\mathfrak{m}}^{i})}$. Also, $|R|=
|R/{\mathfrak{m}}||m|=q^{k_{1}+...+k_{t}+1} \leq q^{\Bbb{I}(R)}$.
\hfill $\square$
\end{proof}

We recall that in \cite[Theorem 2]{wick2}, it was shown that for
every positive integer $g$ there exist finitely many finite rings
with $\gamma(\Gamma(R))=g$. But the result is not  true when we
replace "$\gamma(\Gamma(R))$" with "$\gamma(\Bbb{AG}(R))$". In
 fact, for every finite field $\Bbb{F}$, the ring  $R=\Bbb{F}\times \Bbb{F}\times \Bbb{F}\times
 \Bbb{F}$  has a toroidal  annihilating-ideal graph (since
  $\gamma(\Bbb{AG}(R))=\gamma(\Bbb{AG}(\Bbb{Z}_2\times\Bbb{Z}_2\times\Bbb{Z}_2\times\Bbb{Z}_2))=1)$. Next, we proceed to show that for
 every integers $q>0$ and $g \geq 0$,
there are finitely many Artinian  rings $R$, such that
$\gamma(\Bbb{AG}(R))=g$ and $|R/{\mathfrak{m}}| \leq q$ for every
 maximal ideal ${\mathfrak{m}}$ of $R$. We first prove a reduced form of the
 result.

 \begin{ttheo}\label{2.10}
 For a prime power $q$ and an integer $g \geq 0$,
there are finitely many Artinian local rings $(R,\mathfrak{m})$
satisfying  the following conditions:\\
 {\rm (1)} $\gamma(\Bbb{AG}(R)) = g$,
\\
{\rm (2)} $|R/{\mathfrak{m}}| = q$.
\end{ttheo}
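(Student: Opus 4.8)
The plan is to reduce everything to a bound on $|R|$. Since $(R,\mathfrak{m})$ is Artinian it has finite length $\ell$, and its only composition factor is $R/\mathfrak{m}$ with $|R/\mathfrak{m}|=q$; hence $|R|=q^{\ell}$, so $R$ is finite, and there are only finitely many finite rings of any bounded order. Thus it suffices to bound $\ell$ (equivalently, via Lemma \ref{2.9}, to bound $|\Bbb{I}(R)|$) in terms of $g$ and $q$. I would split on the socle dimension $s:={\rm v.dim}_{R/\mathfrak{m}}{\rm Ann}(\mathfrak{m})$, reading the ``$\gamma=\infty$'' arguments of Lemma \ref{2.4} and Theorem \ref{2.5} quantitatively.

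Suppose first $s\geq 2$. Every nonzero proper ideal $I$ lies in $\mathfrak{m}$, so $IK\subseteq\mathfrak{m}\,{\rm Ann}(\mathfrak{m})=(0)$ for every nonzero ideal $K\subseteq{\rm Ann}(\mathfrak{m})$; hence each socle ideal is adjacent to every other proper ideal. As ${\rm Ann}(\mathfrak{m})\cong\Bbb{F}_q^{\,s}$ contains at least $q+1\geq 3$ nonzero subspaces, all of which are ideals (since $\mathfrak{m}$ annihilates them), picking three of them gives a subgraph $K_{3,\,N-3}$, where $N=|\Bbb{I}(R)|-2$ counts the nonzero proper ideals. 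The bipartite genus formula yields $\lceil (N-5)/4\rceil\leq g$, so $N\leq 4g+5$, and Lemma \ref{2.9} bounds $|R|$.

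Now suppose $s=1$, i.e. $R$ is Gorenstein. Here I would bound the two invariants $t$ (with $\mathfrak{m}^{t+1}=(0)\neq\mathfrak{m}^{t}$) and $k:={\rm v.dim}_{R/\mathfrak{m}}\mathfrak{m}/\mathfrak{m}^{2}$, since then $\ell\leq 1+\sum_{i=1}^{t}\binom{k+i-1}{i}$ is bounded. For $t$: with $r=\lceil (t+1)/2\rceil$ one has $(\mathfrak{m}^{r})^{2}=\mathfrak{m}^{2r}=(0)$, so the distinct powers $\mathfrak{m}^{r},\dots,\mathfrak{m}^{t}$ form a clique $K_{t-r+1}$, and $\gamma(K_{t-r+1})\leq g$ forces $t=O(\sqrt{g})$. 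For $k$ I would work inside $M:={\rm Ann}(\mathfrak{m}^{2})$. Gorenstein (Matlis) duality gives ${\rm v.dim}\,M=\ell-{\rm v.dim}\,\mathfrak{m}^{2}=k+1$, while $\mathfrak{m}M\subseteq{\rm Ann}(\mathfrak{m})={\rm Soc}(R)$ (because $\mathfrak{m}^{2}M=(0)$) shows both that $M^{2}\subseteq{\rm Soc}(R)$ is one-dimensional and that every $\Bbb{F}_q$-subspace of $M$ containing ${\rm Soc}(R)$ is an ideal. Hence multiplication on $M$ descends to a symmetric bilinear form $\bar b$ on $M/{\rm Soc}(R)\cong\Bbb{F}_q^{\,k}$, and for the ideals $J_1,J_2\supseteq{\rm Soc}(R)$ one has $J_1J_2=(0)$ exactly when the corresponding subspaces are $\bar b$-orthogonal. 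A totally isotropic subspace of dimension $\geq\lfloor k/2\rfloor-1$ then supplies a family of pairwise-adjacent such ideals, i.e. a clique $K_{m}$ with $m\geq q^{\lfloor k/2\rfloor-2}$; since $\gamma(K_m)\leq g$, this bounds $k$.

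The main obstacle is precisely this last step: converting ``large embedding dimension'' into a large complete subgraph uniformly in the nilpotency degree $t$ (naive bipartite constructions against $\mathfrak{m}^2$ fail when $\mathfrak{m}^2$ is small, e.g. for Hilbert function $(1,k,1,1)$). The clean device is to pass to $M={\rm Ann}(\mathfrak{m}^{2})$, use $M^{2}\subseteq{\rm Soc}(R)$ so that the product becomes a single $\Bbb{F}_q$-valued symmetric form, and invoke the existence of a totally isotropic subspace whose dimension grows with $k$ — the characteristic-$2$ case, where a symmetric form may be alternating, must be checked separately but the isotropic bound survives. By contrast the case $s\geq 2$ and the bound on $t$ are routine applications of the $K_{m,n}$ and $K_n$ genus formulas used throughout the section. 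Once $t$ and $k$ are bounded, $\ell$ and hence $|R|$ are bounded, leaving only finitely many isomorphism classes.
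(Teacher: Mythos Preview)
Your argument is correct, and in the Gorenstein case ($s=1$) it follows a genuinely different route from the paper. The paper does not isolate $t$ and $k$ as the two invariants to bound; instead it splits on the value of $t$, handles $t=1,2$ by hand (the latter via Lemma~\ref{2.8}), and for $t\ge 3$ runs a three-step cascade bounding $|\mathfrak{m}^{3}|$, then $|\mathfrak{m}^{2}|$, then $|\mathfrak{m}|$, each time manufacturing a $K_{n}$ or $K_{n,3}$ from annihilator ideals of suitably chosen generators and invoking Lemma~\ref{2.9}. Your approach is more structural: the clique $\{\mathfrak{m}^{r},\ldots,\mathfrak{m}^{t}\}$ gives a uniform bound $t=O(\sqrt{g})$ in one stroke, and the passage to $M=\mathrm{Ann}(\mathfrak{m}^{2})$ together with Matlis duality turns the adjacency relation among ideals between $\mathrm{Soc}(R)$ and $M$ into orthogonality for a single symmetric $\Bbb{F}_{q}$-bilinear form on $M/\mathrm{Soc}(R)\cong\Bbb{F}_{q}^{\,k}$, so that Witt's bound on totally isotropic subspaces yields a clique whose size grows like $q^{k/2}$, bounding $k$. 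The trade-off is that your argument imports outside machinery (self-injectivity of Artinian Gorenstein rings for the length identity $\ell(\mathrm{Ann}(\mathfrak{m}^{2}))=1+k$, and the classification of symmetric forms over $\Bbb{F}_{q}$, including the characteristic-$2$ check you flag), whereas the paper's proof stays entirely within elementary ideal-counting and Nakayama arguments; on the other hand your route avoids the somewhat intricate case analysis and the repeated ``if $|\Bbb{I}(Rx_{i})|=3$ then \ldots'' branching in the paper's Steps~2 and~3. The case $s\ge 2$ is handled identically in both.
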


\begin{proof}
 Suppose that  $R$ is  an Artinian local ring with
$|R/{\mathfrak{m}}|=q$ and $\gamma(\Bbb{AG}(R))=g$. Since there
exists only one field of order $q$, we can assume that $R$ is not
a field. Thus there exists a positive integer $t$ such that
${\mathfrak{m}}^{t}\neq (0)$, ${\mathfrak{m}}^{t+1} = (0)$. Since
$R$ is an Artinian ring and $|R/{\mathfrak{m}}|=q$, $R$ is a
finite
ring. It is sufficient to show that $|R|$ is bounded by a constant depending only on $g$ and $q$.
 The proof now proceeds by cases:\\
{\it Case} 1: ${\it {\rm v.dim}_{R/{\mathfrak{m}}}} ({\rm Ann}
({\mathfrak{m}})) \geq 2$. Since ${\rm Ann}({\mathfrak{m}})
{\mathfrak{m}}=(0)$, $K_{|\Bbb{I}(R)|- 5, 3}$ is a subgraph of
$\Bbb{AG}(R)$. By the formula for the genus of complete bipartite
graphs, $\lceil(|\Bbb{I}(R)|-7)/4 \rceil \leq g$ thus
$|\Bbb{I}(R)| \leq 4g+7$ and therefore by Lemma \ref{2.9},
$|R| \leq q^{4g+7}$.\\
{\it Case} 2: ${{\rm v.dim}_{R/{\mathfrak{m}}}} ({\rm Ann}
({\mathfrak{m}}))=1$.\\
{\it Subcase} 1:  $t=1$, i.e.,  ${\mathfrak{m}}^{2} = 0$. Since
$|R|=|R/{\mathfrak{m}}||{\mathfrak{m}}|$  and ${\it {\rm
v.dim}_{R/{\mathfrak{m}}}}{\mathfrak{m}}=1$, we conclude that
$|R|=q^{2}$.\\
{\it Subcase} 2:  $t=2$, i.e., ${\mathfrak{m}}^{2}\neq (0)$ and
${\mathfrak{m}}^{3}=(0)$. Let
 ${\it {\rm v.dim}_{R/{\mathfrak{m}}}}
({\mathfrak{m}}/{\mathfrak{m}}^{2}) = k$. Then  by Lemma
\ref{2.8}, $\Bbb{AG} (R)$ contains a copy of $K_{k-6,3}$.  By the
formula for the genus of complete bipartite graphs, $\lceil(k-8)/4
\rceil \leq g$ thus $|k| \leq 4g+8$. Since
$|R|=|R/{\mathfrak{m}}||{\mathfrak{m}}/{\mathfrak{m}}^{2}||{\mathfrak{m}}^{2}|$
and ${\it {\rm
v.dim}_{R/{\mathfrak{m}}}}{\mathfrak{m}}^{2}=1$, we conclude that  $|R|\leq q^{2+4g+8}$.\\
{\it Subcase} 3:   $t \geq 3$. The proof will be divided into
three steps.\\
{\it Step} 1: We prove that $|{\mathfrak{m}}^{3}|$ is bounded by a
constant depending only on $g$ and  $q$. If $t= 3,4,5$, then
$({\mathfrak{m}}^{3})^{2}=(0)$. Therefore,
$K_{|\Bbb{I}({\mathfrak{m}}^{3})|-1}$ is a subgraph of $\Bbb{AG}
(R)$ and by the formula for the genus of complete graphs, $\lceil
(|\Bbb{I}({\mathfrak{m}}^{3})|-5)/12 \rceil \leq g$ thus
$|\Bbb{I}({\mathfrak{m}}^{3})| \leq 12g+5$ and by Lemma \ref{2.9}, $|{\mathfrak{m}}^{3}| \leq q^{12g+5}$.\\
If $t \geq 5$ then
${\mathfrak{m}}^{t}{\mathfrak{m}}^{3}=(0),{\mathfrak{m}}^{t-1}{\mathfrak{m}}^{3}=(0),{\mathfrak{m}}^{t-2}{\mathfrak{m}}^{3}=(0)$.
Therefore,  $K_{|\Bbb{I}({\mathfrak{m}}^{3})|-4,3}$ is a subgraph
of $\Bbb{AG}(R)$ and by the formula for the genus of complete
bipartite graphs, $\lceil(|I({\mathfrak{m}}^{3})|-5)/4 \rceil \leq
g$. Thus $|\Bbb{I}({\mathfrak{m}}^{3})| \leq 4g+5$ and by Lemma
\ref{2.9},
 $|{\mathfrak{m}}^{3}| \leq q^{4g+5}$.\\
{\it Step} 2:  We prove that $|{\mathfrak{m}}^{2}|$ is bounded by
a constant depending only on $g$ and  $q$. If ${\it {\rm
v.dim}_{R/{\mathfrak{m}}}} ({\mathfrak{m}}^{2}/{\mathfrak{m}}^{3})
= 1$, then $|{\mathfrak{m}}^{2}|=q|{\mathfrak{m}}^{3}|$. Now,  by
Step 1,  we conclude that $|{\mathfrak{m}}^{2}|$ is also bounded
by a constant depending only on $g$ and  $q$. Suppose that ${\it
{\rm v.dim}_{R/{\mathfrak{m}}}}
({\mathfrak{m}}^{2}/{\mathfrak{m}}^{3}) = k\geq 2$.
 Let $\{x_{1}+{\mathfrak{m}}^{3},...,x_{k}+{\mathfrak{m}}^{3}\}$ be a basis for ${\mathfrak{m}}^{2}/{\mathfrak{m}}^{3}$
over $R/{\mathfrak{m}}$. Since ${\mathfrak{m}}^{t}\subsetneqq
Rx_{i}$,   $|\Bbb{I}(Rx_{i})|\geq 3$, for each $i$ $(1\leq i\leq
k)$.  If $|\Bbb{I}(Rx_{1})|=|\Bbb{I}(Rx_{2})|=3$, then  since
$Rx_{i}\cong R/{\rm Ann}(x_{i})$, $1\leq i\leq 2$, there is no
ideals between  ${\rm Ann}(x_{i})$, $1\leq i\leq 2$  and
${\mathfrak{m}}$. Now, if ${\mathfrak{m}}^{2}\nsubseteq{\rm
Ann}(x_{i})$, $1\leq i\leq 2$, then ${\rm
Ann}(x_{i})+{\mathfrak{m}}^{2}={\mathfrak{m}}$, $1\leq i\leq 2$
and by Nakayama's lemma, ${\rm Ann}(x_{i})={\mathfrak{m}}$, $1\leq
i\leq 2$, a contradiction. Therefore,  ${\mathfrak{m}}^{2}
\subseteq {\rm Ann}(x_{i})$, $1\leq i\leq 2$. Since
$({\mathfrak{m}}^{2})(Rx_{i})=(0)$, $1\leq i\leq 2$ and
${\mathfrak{m}}^{t}{\mathfrak{m}}^{2}=(0)$,
$K_{|\Bbb{I}({\mathfrak{m}}^{2})|-2,3}$ is a subgraph of $
\Bbb{AG}(R)$. By the formula for genus of complete bipartite
graphs, $\lceil (|\Bbb{I}({\mathfrak{m}}^{2})|-4)/4\rceil \leq g$,
thus $|\Bbb{I}({\mathfrak{m}}^{2})|\leq 4g+4$ and by Lemma
\ref{2.9}, $|{\mathfrak{m}}^{2}| \leq q^{4g+4}$.  Without loss of
generality we can assume that $|\Bbb{I}(Rx_{1})|\geq 4$. Then
$K_{|\Bbb{I}({\rm Ann}(x_{1}))|-4,3}$ is a subgraph of $
\Bbb{AG}(R)$. By the formula for genus of complete bipartite
graphs $\lceil (|\Bbb{I}({\rm Ann}(x_{1}))|-6)/4\rceil \leq g$,
thus $|\Bbb{I}({\rm Ann}(x_{1}))|\leq 4g+6$. Since $|\Bbb{I}({\rm
Ann}(x_{1}) \cap {\mathfrak{m}}^{2})|\leq |\Bbb{I}({\rm
Ann}(x_{1})|$ and dimension of every vector space less than or
equal to cardinality of each its generating set, we conclude that
${\it {\rm v.dim}_{R/{\mathfrak{m}}}}(({\rm Ann}(x_{1}) \cap
{\mathfrak{m}}^{2}) + {\mathfrak{m}}^{3})/{\mathfrak{m}}^{3} \leq
|\Bbb{I}({\rm Ann}(x_{1})|$. Let  $\theta : {\mathfrak{m}}^{2}
\rightarrow {\mathfrak{m}}^{2}x_{1}$ be the  group homomorphism
defined by multiplication by $x_{1}$. Then we have
$|{\mathfrak{m}}^{2}|/|{\mathfrak{m}}^{2}\cap {\rm Ann}(x_{1})| =
|{\mathfrak{m}}^{2}x|$. It follows that $|{\mathfrak{m}}^{2}|\leq
|{\mathfrak{m}}^{3}||{\mathfrak{m}}^{2}\cap {\rm Ann}(x_{1})| \leq
|{\mathfrak{m}}^{3}||{\mathfrak{m}}^{2}\cap {\rm Ann}(x_{1}) +
{\mathfrak{m}}^{3}| \leq |{\mathfrak{m}}^{3}|^{2} q^{|\Bbb{I}({\rm
Ann}(x_{1}))|} \leq |{\mathfrak{m}}^{3}|^{2} q^{4g+6} $.  Now,  by
Step 1, we conclude that $|{\mathfrak{m}}^{2}|$ is also bounded by a constant depending only on   $g$ and $q$.\\
{\it Step} 3: Now,  we show  that $|{\mathfrak{m}}|$ is bounded by
a constant depending only on $g$ and  $q$. We can assume that
${\it {\rm v.dim}_{R/{\mathfrak{m}}}}
({\mathfrak{m}}/{\mathfrak{m}}^{2})=k\geq 5$.
 Let $\{x_{1}+{\mathfrak{m}}^{2},...,x_{k}+{\mathfrak{m}}^{2}\}$ be a basis for ${\mathfrak{m}}/{\mathfrak{m}}^{2}$
over $R/{\mathfrak{m}}$. Since ${\mathfrak{m}}^{t}\subsetneqq
Rx_{i}$,   $|\Bbb{I}(Rx_{i})|\geq 3$, for each $i$ $(1\leq i\leq
k)$.  If $|\Bbb{I}(Rx_{1})|=|\Bbb{I}(Rx_{2})|=3$, then  since
$Rx_{i}\cong R/{\rm Ann}(x_{i})$, $1\leq i\leq 2$, there is no
ideals between  ${\rm Ann}(x_{i})$, $1\leq i\leq 2$ and
${\mathfrak{m}}$. If  ${\mathfrak{m}}^{2}\nsubseteq{\rm
Ann}(x_{i})$, for  $i=1$ or $2$,  then ${\rm
Ann}(x_{i})+{\mathfrak{m}}^{2}={\mathfrak{m}}$, and so by
Nakayama's lemma,  ${\rm Ann}(x_{i})={\mathfrak{m}}$. Since ${{\rm
v.dim}_{R/{\mathfrak{m}}}} ({\rm Ann} ({\mathfrak{m}}))=1$ and
${\mathfrak{m}}^t{\mathfrak{m}}=(0)$,
${\rm{Ann}}({\mathfrak{m}})={\mathfrak{m}}^t$. Thus $Rx_i\subseteq
{\mathfrak{m}}^t\subseteq {\mathfrak{m}}^2$, a contradiction.
Therefore,  ${\mathfrak{m}}^{2} \subseteq {\rm Ann}(x_{1})\cap
{\rm Ann}(x_{2})$. If ${\it {\rm v.dim}_{R/{\mathfrak{m}}}} ({\rm
Ann}(x_{i})/{\mathfrak{m}}^{2}) \leq k-2$ for $i=1$ or $2$, then
there exists proper ideal $I \subseteq R$ such that
${\rm{Ann}}(x_{i})/{\mathfrak{m}}^{2}\subsetneqq
I/{\mathfrak{m}}^{2}
\subsetneqq{\mathfrak{m}}/{\mathfrak{m}}^{2}$,  and hence  ${\rm
Ann}(x_{i})\subsetneqq I\subsetneqq {\mathfrak{m}}$, a
contradiction. Thus ${\it {\rm v.dim}_{R/{\mathfrak{m}}}} ({\rm
Ann}(x_{i})/{\mathfrak{m}}^{2}) \geq k-1$ for $i=1,~ 2$. It
follows that  ${\it {\rm v.dim}_{R/{\mathfrak{m}}}} ({\rm
Ann}(x_{1})\cap {\rm Ann}(x_{2})) \geq k-2$. Since  $Rx_{1}({\rm
Ann}(x_{1})\cap {\rm Ann}(x_{2})) = (0)$, $Rx_{2}({\rm
Ann}(x_{1})\cap {\rm Ann}(x_{2})) = (0)$ and
${\mathfrak{m}}^{t}({\rm Ann}(x_{1})\cap {\rm Ann}(x_{2})) = (0)$
and $|\Bbb{I}({\rm Ann}(x_{1})\cap {\rm
Ann}(x_{2}))\setminus\{(0), Rx_1, Rx_2\}|\geq k-5$,  we conclude
that $K_{k-5,3}$ is a subgraph of $\Bbb{AG}(R)$. Now,  by the
formula for genus of complete bipartite graphs, $k \leq 5g+6$
hence $|{\mathfrak{m}}| \leq |{\mathfrak{m}}^{2}|^{2}q^{5g+6}$.
 Thus by Step 2, we conclude that $|{\mathfrak{m}}|$ is also
bounded by a constant depending only on   $g$ and $q$.\\
Finally, without loss of generality we can assume that
$|\Bbb{I}(Rx_{1})| \geq4$. Thus  $K_{|\Bbb{I}({\rm
Ann}(x_{1}))|-4,3}$ is a subgraph of $\Bbb{AG}(R)$. By the formula
for genus of complete bipartite graphs, $\lceil |(\Bbb{I}({\rm
Ann}(x_{1}))|-6)/4 \rceil \leq g$ and hence $|\Bbb{I}({\rm
Ann}(x_{1}))|\leq 4g+6$. Since $|\Bbb{I}({\rm Ann}(x_{1}) \cap
{\mathfrak{m}}^{2})|\leq |\Bbb{I}({\rm Ann}(x_{1})|$ and the
dimension of every vector space less than or equal to cardinality
of each its generating set, we conclude that ${\it {\rm
v.dim}_{R/{\mathfrak{m}}}} ({\rm Ann}(x_{1})+{\mathfrak{m}}^{2})/
{\mathfrak{m}}^{2} \leq |\Bbb{I}({\rm Ann}(x_{1}))|$. Let
$\vartheta :{\mathfrak{m}} \rightarrow {\mathfrak{m}}x_{1}$ be the
group homomorphism defined by multiplication by $x_{1}$. Then
$|{\mathfrak{m}}|=|{\mathfrak{m}}x_{1}||{\rm Ann}(x_{1})| \leq
|{\mathfrak{m}}^{2}||{\rm Ann}(x_{1})+{\mathfrak{m}}^{2}| \leq
|{\mathfrak{m}}^{2}|^{2} q^{|\Bbb{I}({\rm Ann}(x_{1}))|} \leq
|{\mathfrak{m}}^{2}|^{2} q^{4g+6}$.  Now,  by Step 2, we conclude
that $|{\mathfrak{m}}|$ is also bounded by a constant depending
only on   $g$ and $q$. This finishes the proof, since
 $|R|=|R/{\mathfrak{m}}||{\mathfrak{m}}|$ and so   $|R|$ is bounded by a constant depending only on $g$
and $q$.\hfill$\square$
\end{proof}

\begin{ccoro}\label{2.11}
 For integers $q>0$ and $g \geq 0$,
there are finitely many Artinian  rings $R$ satisfying  the following conditions:\\
 {\rm (1)} $\gamma(\Bbb{AG}(R)) = g$,
\\
{\rm (2)} $|R/{\mathfrak{m}}| \leq q$  for any maximal ideal
${\mathfrak{m}}$ of $R$.
\end{ccoro}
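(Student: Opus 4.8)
The plan is to reduce the global statement to the local classification already established in Theorem~\ref{2.10}, using the Artinian structure theorem together with a complete-graph argument to bound the number of local factors. First I would use the fact that an Artinian ring decomposes as $R \cong R_1 \times \cdots \times R_n$ with each $(R_i,\mathfrak{m}_i)$ Artinian local. By Lemma~\ref{2.6} the hypothesis $\gamma(\Bbb{AG}(R)) = g < \infty$ already forces $n < \infty$ and $\gamma(\Bbb{AG}(R_i)) < \infty$ for each $i$; moreover, since $\Bbb{AG}(R_i)$ is a subgraph of $\Bbb{AG}(R)$, we in fact have $\gamma(\Bbb{AG}(R_i)) \le g$. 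The maximal ideals of $R$ are exactly the $\mathfrak{m}_i$ (pulled back through the projections), and $R/\mathfrak{m} \cong R_i/\mathfrak{m}_i$, so condition~(2) says each residue field $R_i/\mathfrak{m}_i$ is finite of order a prime power $q_i \le q$. As there are only finitely many prime powers not exceeding $q$ and only finitely many integers in $\{0,1,\dots,g\}$, Theorem~\ref{2.10}, applied once for each admissible pair $(q_i,g_i)$ with $q_i \le q$ a prime power and $0 \le g_i \le g$, yields a finite set $\mathcal{L}$ of isomorphism classes of Artinian local rings to which every factor $R_i$ must belong.

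It remains to bound $n$, and this is the one place where a genuinely new argument is needed. For each $i$ let $e_i$ be the idempotent corresponding to the $i$-th factor, so that $e_i R = 0 \times \cdots \times R_i \times \cdots \times 0$. These $n$ ideals are distinct, nonzero, and pairwise orthogonal, since $(e_i R)(e_j R) = (0)$ whenever $i \ne j$. Hence $\{e_1 R,\dots,e_n R\}$ spans a copy of the complete graph $K_n$ inside $\Bbb{AG}(R)$, and therefore $\gamma(K_n) \le \gamma(\Bbb{AG}(R)) = g$. By the genus formula $\gamma(K_n) = \lceil (n-3)(n-4)/12 \rceil$ recalled in the introduction, this gives $(n-3)(n-4) \le 12g$, so $n$ is bounded above by a constant $N = N(g)$ depending only on $g$.

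Putting these together finishes the proof: every ring $R$ meeting the two conditions is isomorphic to a product of at most $N(g)$ factors, each drawn from the finite list $\mathcal{L}$, and there are only finitely many such products up to isomorphism. The main obstacle is the uniform control of the number of factors $n$; I expect the complete subgraph arising from the orthogonal idempotent-generated ideals to resolve it cleanly, while the reduction to individual factors is a routine combination of Lemma~\ref{2.6} and Theorem~\ref{2.10}. One point to watch is that Theorem~\ref{2.10} is stated for the residue field order \emph{equal} to a prime power $q$, so it must be applied over the finite range of prime powers $q_i \le q$ rather than directly with the inequality appearing in condition~(2).
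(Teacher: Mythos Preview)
Your argument is correct and follows a genuinely different route from the paper's. The paper proceeds by a case split on the number $n$ of local factors in $R\cong R_1\times\cdots\times R_n$: for $n\ge 3$ it exhibits the complete bipartite subgraph $K_{|\Bbb{I}(R_1)|-1,\,3}$ (the ideals of $R_1$ against three nonzero ideals inside $\overline{R_1}=R_2\times\cdots\times R_n$), and for $n=2$ it runs a similar bipartite argument with a further split on $|\Bbb{I}(R_2)|$; in each case the genus formula bounds $|\Bbb{I}(R_1)|$ and Lemma~\ref{2.9} then bounds $|R_1|$ directly. Theorem~\ref{2.10} is invoked only in the case $n=1$. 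Your approach is more modular: you apply Theorem~\ref{2.10} uniformly to every factor (ranging over the finitely many prime powers $\le q$ and genera $\le g$), and you control the number of factors separately via the complete graph $K_n$ on the pairwise orthogonal idempotent ideals $e_iR$. This yields a clean bound $n\le N(g)$ that the paper's write-up leaves implicit (the paper asserts it is enough to bound $|R_1|$, but bounding $|R|=\prod|R_i|$ also requires controlling $n$, and your $K_n$ argument supplies exactly that). Conversely, the paper's approach delivers explicit numerical bounds such as $|\Bbb{I}(R_1)|\le 4g+3$ without having to iterate the local theorem over all admissible pairs $(q_i,g_i)$.
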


\begin{proof}
Let $R$ be an Artinian ring with $\gamma(\Bbb{AG}(R)) = g$. It is
well known that $R \cong R_{1} \times \ldots \times R_{n}$ for
some positive integer $n$, where every $R_{i}$ $(i=1,2,...,n)$, is
an Artinian local ring. Since $|R_{i}/{\mathfrak{m}}_{i}| \leq q$,
every $R_{i}$ is a finite local ring. For ease of exposition, we
assume that $|R_{i}|\geq |R_{i+1}|$ for $i= 1,2,...,n-1$ as there
is no loss of generality with this
assumption.\\
It sufficient to show that $|R|$ is bounded by a constant
depending only on $g$ and $q$. Furthermore, $|R_{1}| \geq |R_{i}|$
for all $i$, it is enough to bound $R_{1}$ by a constant depending
only on $g$ and $q$. By a slight abuse of notation, we denote the
set $0 \times ... \times R_{i} \times ...\times 0$ by $R_{i}$, and
the set $R_{1} \times ...\times R_{i-1} \times 0 \times R_{i+1}
\times ... R_{n}$
by $\bar{R_{i}}$.\\
If $n \geq 3$, then $|\Bbb{I}(\overline{R_{1}^{*}})|\geq 3$. Thus
$\Bbb{AG}(R)$ contains a copy of $K_{|\Bbb{I}(R_{1})|-1,3}$. By
the formula for the genus of complete bipartite graphs, we have
$\lceil (|\Bbb{I}(R_{1})| - 3)/4\rceil \leq g$ so
$|\Bbb{I}(R_{1})| \leq 4g+3$ and thus by Lemma \ref{2.9},
$|R_{1}| \leq q^{4g+3}$.\\
Suppose that  $n=2$. Let $|\Bbb{I}(R_{2})|\geq 4$, this means
$\Bbb{AG}(R)$ contains a copy of
$K_{|\Bbb{I}(R_{1})|-1,3}$, which as above gives $|R_{1}|\leq q^{4g+3}$.\\
Let $|\Bbb{I}(R_{2})|\leq 3$. If $R_{1}$ be a field, then $|R_{1}|
\leq q$. Thus we may assume that $R_{1}$ is not a field. Let
${\mathfrak{m}} \neq (0)$ be the maximal ideal of $R_{1}$. Since
$R_{1}$ is an Artinian ring, ${\rm{Ann}}({\mathfrak{m}}) \neq
(0)$. Assume that  $I = ({\mathfrak{m}} , 0)$, $J_{1}= ({\rm Ann}
({\mathfrak{m}}) , 0)$, $J_{2} = ({\rm Ann} ({\mathfrak{m}}) ,
R_{2})$ and $J_{3} = (0 , R_{2})$. Then $IJ_{i} = (0)$ for $1 \leq
i\leq 3$. Therefore,  $\Bbb{AG}(R)$ contains a copy of
$K_{|\Bbb{I}(R_{1})|-2,3}$. Now,  by the formula for the genus of
complete bipartite graphs,
 $\lceil(|\Bbb{I}(R_{1})|-4)/4\rceil\leq g$ and so $|\Bbb{I}(R_{1})|\leq
 4g+4$.
Therefore,  by Lemma \ref{2.9}, $|R_{1}|\leq q^{4g+4}$.\\
Finally, suppose that $n=1$.
 Let $R$ be an Artinian local ring with $|R/{\mathfrak{m}}|=q$. Then by Theorem \ref{2.10},
 $|R|$ is bounded by a constant
depending only on $g$ and $q$. \hfill $\square$
 \end{proof}

 \section{\large\bf The Genus of  Annihilating-Ideal Graphs of Noetherian Rings}

 In this section, we investigate the genus of annihilating-ideal
 graphs of Noetherian rings.

Let $R$ be a ring. We say that the annihilating-ideal graph
$\Bbb{AG}(R)$ has assenting chain condition (ACC) (resp.,
descending chain condition (DCC)) on vertices if $R$ has ACC
(resp., DCC) on ${\Bbb{A}}(R)$.

First, we need the  following useful lemmas.

\begin{llem}\label{3.1}{\rm \cite[Theorem 1.1]{beh-rak1}} Let $R$ be a ring  that is not a domain.
 Then $\Bbb{AG}(R)$ has ACC (resp., DCC) on vertices if and only if
 $R$ is a Noetherian (resp., an Artinian) ring.
\end{llem}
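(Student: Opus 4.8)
The plan is to treat the two equivalences separately and, in each, prove the easy implication directly and the hard implication by transferring chain conditions from the ambient lattice of all ideals down to the vertex set $\Bbb{A}(R)$. The easy directions are immediate: if $R$ is Noetherian (resp.\ Artinian) it has ACC (resp.\ DCC) on \emph{all} ideals, hence on the subset $\Bbb{A}(R)$, so $\Bbb{AG}(R)$ has ACC (resp.\ DCC) on vertices. The substance is the converse, namely that ACC on $\Bbb{A}(R)$ forces $R$ Noetherian and DCC on $\Bbb{A}(R)$ forces $R$ Artinian.

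The engine of the converse is two elementary transfer observations, both of which use that $R$ is not a domain. Since $R$ is not a domain there are nonzero $a,b$ with $ab=(0)$; fix such an $a$ and set $N={\rm Ann}(a)\neq(0)$. \emph{Observation 1:} every ideal $J\subseteq N$ satisfies $Ja=(0)$, so $(a)\subseteq{\rm Ann}(J)$ and hence $J\in\Bbb{A}(R)$; that is, $\Bbb{I}(N)\subseteq\Bbb{A}(R)$. Consequently ACC (resp.\ DCC) on vertices forces ACC (resp.\ DCC) on the submodule lattice of $N$, so $N$ is a Noetherian (resp.\ Artinian) $R$-module. \emph{Observation 2:} for any ideal $I$ with ${\rm Ann}(I)\neq(0)$ the ideal ${\rm Ann}(I)$ is itself an annihilating-ideal, since $I\subseteq{\rm Ann}({\rm Ann}(I))$ is nonzero; thus every nonzero annihilator lies in $\Bbb{A}(R)$. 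Because $X\mapsto{\rm Ann}(X)$ is an order-reversing self-map of the set of annihilators, a chain condition of \emph{either} type on $\Bbb{A}(R)$ forces \emph{both} ACC and DCC on the lattice of nonzero annihilators of $R$.

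From here I would bootstrap from $N$ up to all of $R$ by devissage along $0\to N\to R\to R/N\to 0$ together with the annihilator structure. The annihilator chain conditions of Observation 2 let me invoke the classical facts that the maximal elements of $\{{\rm Ann}(x):x\neq0\}$ are prime and that, under ACC on annihilators, there are only finitely many associated primes; combining these with the module finiteness of $N$ from Observation 1 and an induction that peels off successive annihilator ideals, I would conclude that $R$ is Noetherian in the ACC case. In the DCC case the additional DCC on annihilators forces every prime of $R$ to be simultaneously minimal and maximal, i.e.\ $\dim R=0$, and then Hopkins--Levitzki (a ring is Artinian $\iff$ it is Noetherian and $\dim R=0$) finishes the Artinian case.

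The hard part will be exactly this last bootstrapping step. The quotient $R/N$ need not satisfy the hypothesis of the lemma --- it may be a domain, in which case $\Bbb{AG}(R/N)$ is empty and carries no information --- so a naive induction on $R/N$ fails; moreover strictness of chains is not preserved under the maps $I\mapsto I\cap N$ or $I\mapsto{\rm Ann}(I)$, so one cannot simply reflect a bad chain. The induction must therefore be organized around the finitely many associated primes and the module-finite piece $N$ at once, using \emph{both} chain conditions on annihilators together to exclude the pathologies (infinitely many associated primes, or positive dimension) that no single chain condition can rule out. Choreographing this devissage so that it terminates is the crux of the proof.
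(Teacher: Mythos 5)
Your easy direction and your Observation 1 are fine, but the proposal stalls exactly where you say it does, and the devissage you sketch does not close the gap. The missing idea is that the quotient $R/N$ \emph{is} controlled by the vertex set of $\Bbb{AG}(R)$, just not via its own annihilating-ideal graph: fix a nonzero $a$ with $N={\rm Ann}(a)\neq(0)$ and note that $R/N\cong Ra$ as $R$-modules, while every ideal $J\subseteq Ra$ satisfies $J\,{\rm Ann}(a)\subseteq Ra\,{\rm Ann}(a)=(0)$, so every $R$-submodule of $Ra$ is a vertex of $\Bbb{AG}(R)$, exactly as in your Observation 1 for $N$. Hence ACC (resp.\ DCC) on vertices makes both ends of $0\to N\to R\to R/N\to 0$ Noetherian (resp.\ Artinian) $R$-modules, and therefore $R$ itself. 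Equivalently, for any chain $(I_n)$ of ideals the two chains $(I_n\cap{\rm Ann}(a))$ and $(I_na)$ lie in $\Bbb{A}(R)$ (the latter because $I_na\cdot{\rm Ann}(a)=(0)$), and if both stabilize at index $n$ a short diagram chase ($u\in I_{n+1}$, $ua=va$ with $v\in I_n$, so $u-v\in I_{n+1}\cap{\rm Ann}(a)=I_n\cap{\rm Ann}(a)$) shows $I_{n+1}=I_n$. This is the content of the cited result \cite[Theorem 1.1]{beh-rak1}; the present paper gives no proof of its own, it only quotes it.

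Your proposed replacement for this step does not work as stated. You correctly observe that strictness is not preserved by $I\mapsto I\cap N$ alone, but the resolution is to use the \emph{pair} of maps $I\mapsto I\cap N$ and $I\mapsto Ia$ simultaneously, not to retreat to associated primes. The associated-prime route has its own problems: in the ACC case, finiteness of ${\rm Ass}(R)$ plus Noetherianity of the single module $N$ does not bound the submodule lattice of $R/N$, about which you have said nothing; and in the DCC case your appeal to Hopkins--Levitzki presupposes that $R$ is Noetherian, which is precisely what is not yet available there. So the crux you flag is a genuine gap, and the fix is the symmetric treatment of ${\rm Ann}(a)$ and $Ra$ above rather than the devissage you outline.
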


 \begin{llem}\label{3.2}{\rm \cite[Proposition 1.7]{beh-rak1}} Let $R$ be a Noetherian ring. If all non-trivial ideals of $R$
 are vertices of $\Bbb{AG}(R)$, then $R$ has finitely many maximal ideals.
 \end{llem}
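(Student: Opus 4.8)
The plan is to reduce the statement to the classical finiteness of the set of associated primes of a Noetherian ring, by proving that under the hypothesis every maximal ideal of $R$ is itself an associated prime, i.e.\ the annihilator of a single nonzero element.

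First I would dispose of the degenerate case. If $R$ is a field it has a unique maximal ideal and there is nothing to prove, so I assume $R$ has a nonzero proper ideal. (As a sanity check, the hypothesis already rules out $R$ being a domain that is not a field: in a domain every nonzero proper ideal has zero annihilator and hence cannot be a vertex of $\Bbb{AG}(R)$. Thus $\Bbb{AG}(R)$ is a genuine nonempty graph and every nonzero proper ideal has nonzero annihilator.)

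The heart of the argument is a short local computation. Fix a maximal ideal $\mathfrak{m}$ of $R$. Since $R$ is not a field, $\mathfrak{m}$ is a nonzero proper ideal, so by hypothesis it is a vertex of $\Bbb{AG}(R)$; that is, there is a nonzero ideal $J$ with $\mathfrak{m}J=(0)$. Choosing any nonzero $a\in J$ gives $a\mathfrak{m}=(0)$, i.e.\ $\mathfrak{m}\subseteq {\rm Ann}(a)$. Because $a\neq 0$ we have ${\rm Ann}(a)\neq R$, and the maximality of $\mathfrak{m}$ then forces ${\rm Ann}(a)=\mathfrak{m}$. Hence $\mathfrak{m}={\rm Ann}(a)$ is the annihilator of a single nonzero element of $R$, which is precisely the statement that $\mathfrak{m}\in{\rm Ass}(R)$.

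Finally, I would invoke the standard fact that a Noetherian ring $R$ has only finitely many associated primes. Since the previous step exhibits every maximal ideal of $R$ as a member of the finite set ${\rm Ass}(R)$, it follows immediately that $R$ has finitely many maximal ideals. The main obstacle---indeed the only non-formal step---is upgrading the raw information ``$\mathfrak{m}$ has nonzero annihilator'' to ``$\mathfrak{m}$ is the annihilator of one element''; this is exactly where the maximality of $\mathfrak{m}$ is used, after which the finiteness of ${\rm Ass}(R)$ closes the argument.
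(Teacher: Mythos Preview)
The paper does not supply its own proof of this lemma; it is simply quoted from \cite[Proposition~1.7]{beh-rak1}. Your argument is correct and is the natural one: once $\mathfrak{m}$ is a vertex of $\Bbb{AG}(R)$ you pick a nonzero $a$ with $\mathfrak{m}a=(0)$, and maximality of $\mathfrak{m}$ forces $\mathfrak{m}={\rm Ann}(a)$, so every maximal ideal lies in ${\rm Ass}(R)$, which is finite for a Noetherian ring. There is nothing to compare against in the present paper, but this is almost certainly the argument in the cited source as well.
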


 By Lemma \ref{2.1}, any non-trivial ideal of an Artinian ring $R$ is a vertex of $\Bbb{AG}(R)$. Now, a natural question is posed: If $R$ is a Noetherian ring and
 all non-trivial ideals of $R$ are vertices of $\Bbb{AG}(R)$, then
 is $R$ an Artinian ring?  the answer of this question is negative (see  \cite[Example 17]{aal}).
 Next,  we have the following  result.

\begin{ppro}$\label{3.3}$ Let $(R, {\mathfrak{m}})$ be a Gorenstein ring. Then all non-trivial ideals of
$R$ are vertices of $\Bbb{AG}(R)$.
\end{ppro}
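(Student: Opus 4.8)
The plan is to prove the statement by verifying directly the defining condition for a vertex: an ideal $I$ is a vertex of $\Bbb{AG}(R)$ precisely when $I \neq (0)$ and $I$ is an annihilating-ideal, i.e. there is a nonzero ideal $J$ with $IJ = (0)$, equivalently ${\rm Ann}(I) \neq (0)$. So for a non-trivial (nonzero proper) ideal $I$ it suffices to exhibit such a $J$. The key observation is that the Gorenstein hypothesis hands us one universal candidate, namely the socle ${\rm Ann}({\mathfrak{m}})$, and that this socle is annihilated by every proper ideal at once.

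First I would record that, since $(R,{\mathfrak{m}})$ is local, every non-trivial ideal $I$ is nonzero and sits inside the unique maximal ideal, $I \subseteq {\mathfrak{m}}$. Next I would invoke the hypothesis: because ${\rm v.dim}_{R/{\mathfrak{m}}}({\rm Ann}({\mathfrak{m}})) = 1$, the ideal ${\rm Ann}({\mathfrak{m}})$ is nonzero. By the very definition of the annihilator we have ${\mathfrak{m}}\,{\rm Ann}({\mathfrak{m}}) = (0)$, and since ${\rm Ann}(\cdot)$ is inclusion-reversing, $I \subseteq {\mathfrak{m}}$ gives $I\,{\rm Ann}({\mathfrak{m}}) \subseteq {\mathfrak{m}}\,{\rm Ann}({\mathfrak{m}}) = (0)$. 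Equivalently, ${\rm Ann}({\mathfrak{m}}) \subseteq {\rm Ann}(I)$. Thus $J := {\rm Ann}({\mathfrak{m}})$ is a nonzero ideal with $IJ = (0)$, so $I$ is an annihilating-ideal; being also nonzero, it is a vertex of $\Bbb{AG}(R)$.

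The argument is essentially immediate, so there is no substantial obstacle to overcome; the only real content is the recognition that the socle serves simultaneously as a witness for every non-trivial ideal. The one point requiring a word of care is the degenerate case in which $R$ is a field (where ${\mathfrak{m}} = (0)$): there are then no non-trivial ideals and the conclusion holds vacuously, so we may harmlessly assume $R$ is not a field and hence ${\mathfrak{m}} \neq (0)$, which is exactly the situation in which ${\rm Ann}({\mathfrak{m}})$ is a genuine nonzero proper ideal available as the required $J$.
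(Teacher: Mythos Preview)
Your proof is correct and follows essentially the same approach as the paper: both use the Gorenstein hypothesis to conclude that ${\rm Ann}({\mathfrak{m}})\neq(0)$ and then observe that this socle witnesses every non-trivial ideal as an annihilating-ideal. The paper's version is more terse, simply noting that ${\mathfrak{m}}$ is a vertex and leaving the passage to arbitrary $I\subseteq{\mathfrak{m}}$ implicit, whereas you spell out the inclusion $I\,{\rm Ann}({\mathfrak{m}})\subseteq{\mathfrak{m}}\,{\rm Ann}({\mathfrak{m}})=(0)$ explicitly.
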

 \begin{proof} If $(R, {\mathfrak{m}})$ is  a Gorenstein ring, then
  ${\rm v.dim}_{R/{\mathfrak{m}}}({\rm
 Ann}({\mathfrak{m}}))=1$. It follows that  ${\rm
 Ann}({\mathfrak{m}})\neq (0)$ and so ${\mathfrak{m}}$ is a vertex
 of $\Bbb{AG}(R)$, i.e., all non-trivial ideals of
$R$ are vertices of $\Bbb{AG}(R)$. \hfill $\square$
 \end{proof}

\begin{ppro}\label{3.4}
 Let $R$ be a Noetherian local ring. If  $\gamma(\Bbb{AG}(R))<\infty$,
 then either $R$ is a domain or all non-trivial ideals of $R$ are vertices of $\Bbb{AG}(R)$.
 \end{ppro}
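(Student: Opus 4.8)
Let $(R,\mathfrak{m})$ be a Noetherian local ring with $\gamma(\Bbb{AG}(R))<\infty$. Then either $R$ is a domain or every non-trivial ideal of $R$ is a vertex of $\Bbb{AG}(R)$.

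The plan is to argue by contraposition on the second alternative: assume $R$ is not a domain and that some non-trivial ideal fails to be a vertex, and derive that $\gamma(\Bbb{AG}(R))=\infty$. Recall that an ideal $I$ is a vertex precisely when $\mathrm{Ann}(I)\neq(0)$. So suppose there exists a non-trivial ideal $I$ with $\mathrm{Ann}(I)=(0)$; I want to produce arbitrarily large complete or complete-bipartite subgraphs. First I would observe that since $R$ is not a domain it has a nonzero annihilator ideal, so $\Bbb{AG}(R)$ is nonempty, and since $R$ is Noetherian local, by Lemma~\ref{3.1} the graph $\Bbb{AG}(R)$ satisfies ACC on vertices. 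The key tension to exploit is that $I$ is ``large'' (its annihilator is zero, so $I$ is not killed by anything) while $R$ simultaneously carries genuine zero-divisors.

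\smallskip
The main structural step is to understand what $\mathrm{Ann}(I)=(0)$ for a proper non-trivial $I$ forces. Since $R$ is Noetherian local and not a domain, the set $Z(R)$ of zero-divisors is a union of the finitely many associated primes of $(0)$; because $R$ is local with maximal ideal $\mathfrak{m}$, and $(0)$ is not prime, there is a nonzero ideal $K=\mathrm{Ann}(a)$ for some zero-divisor $a$, giving a vertex. I expect the heart of the argument to be: if $R$ has infinitely many ideals contained in some fixed annihilator ideal, one gets an infinite complete bipartite subgraph via the formula $\gamma(K_{m,n})=\lceil (m-2)(n-2)/4\rceil$, forcing infinite genus. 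Concretely, if $J\neq(0)$ is any vertex with $\mathrm{Ann}(J)\neq(0)$, then every nonzero ideal inside $\mathrm{Ann}(J)$ is adjacent to every nonzero ideal inside $J$ (since their product is $(0)$), so whenever both $\mathbb{I}(J)^*$ and $\mathbb{I}(\mathrm{Ann}(J))^*$ are infinite we immediately get $\gamma(\Bbb{AG}(R))=\infty$. Thus $\gamma<\infty$ forces every vertex $J$ to have either $\mathbb{I}(J)$ or $\mathbb{I}(\mathrm{Ann}(J))$ finite, a strong finiteness restriction I would push on.

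\smallskip
The finite-genus hypothesis should then be combined with Remark~\ref{2.2} to eliminate the infinite-residue-field obstructions and with the Noetherian condition to bound chains. The decisive move is to show that if some non-trivial $I$ has $\mathrm{Ann}(I)=(0)$, then because $R$ is not a domain one can locate a nonzero ideal $L$ with $L^2=(0)$ or an annihilator ideal $N$ with $|\mathbb{I}(N)|=\infty$, and then build $K_n$ for all $n$ (from a square-zero ideal containing infinitely many subideals) or $K_{m,\infty}$ (from a nilpotent annihilator pair), contradicting $\gamma(\Bbb{AG}(R))<\infty$. This is the same mechanism used throughout Section~2: locate a product-zero pair of ideals at least one of whose ideal lattice is infinite, then invoke the genus formulas for $K_n$ and $K_{m,n}$.

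\smallskip
The step I expect to be the main obstacle is the dichotomy itself: showing that the existence of a proper non-trivial $I$ with $\mathrm{Ann}(I)=(0)$ is genuinely incompatible with $R$ having any nonzero annihilator ideal of \emph{finite} index-of-ideals. The delicate point is that $\mathrm{Ann}(I)=(0)$ does not, by itself, constrain ideals disjoint from $I$; I would need to use that $R$ is local so that every nonzero ideal meets the socle $\mathrm{Ann}(\mathfrak{m})$, and that $R$ is not a domain so the socle is nonzero. Tracking how the nonzero socle interacts with the ideal $I$ (whether $I$ contains the socle, and whether $\mathfrak{m}$ is a vertex) is where the real care is needed, and I anticipate the proof concludes by showing $\mathfrak{m}$ must itself be a vertex (its annihilator is the nonzero socle), after which any non-trivial ideal sits between the socle and $\mathfrak{m}$ and inherits vertex status — contradicting the assumed non-vertex $I$ unless $R$ is a domain.
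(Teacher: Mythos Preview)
Your proposal lands on the correct target --- showing that $\mathfrak{m}$ is a vertex, i.e.\ $\mathrm{Ann}(\mathfrak{m})\neq(0)$, after which every non-trivial ideal $J\subseteq\mathfrak{m}$ satisfies $J\cdot\mathrm{Ann}(\mathfrak{m})=(0)$ --- but the route you sketch has a genuine gap at the decisive step. You write that ``$R$ is not a domain so the socle is nonzero'' and that ``every nonzero ideal meets the socle''; neither holds for non-Artinian Noetherian local rings. For instance $R=k[x,y]_{(x,y)}/(xy)$ is local Noetherian, not a domain, yet $\mathrm{Ann}(\mathfrak{m})=(0)$. So the nonvanishing of the socle is exactly what must be \emph{deduced} from $\gamma(\Bbb{AG}(R))<\infty$, not assumed from non-domainness. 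Your bipartite observation (that finite genus forces, for each vertex $J$, one of $\Bbb{I}(J)$ or $\Bbb{I}(\mathrm{Ann}(J))$ to be small) is correct and is indeed the engine the paper uses, but you have not identified the right $J$ to feed into it.

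The paper supplies the missing structural input as follows. Assume $R$ is not a domain and not Artinian. By Lemma~\ref{3.1} there is an infinite strictly descending chain in $\Bbb{A}(R)^*$. Choose $P$ maximal in $\Bbb{A}(R)^*$ above this chain; by Kaplansky's theorem such a maximal annihilator is prime, and it contains the whole chain, so $|\Bbb{I}(P)^*|=\infty$. Set $I=\mathrm{Ann}(P)\neq(0)$ and split on $|\Bbb{I}(I)^*|$: if $\geq 3$ one embeds $K_{3,|\Bbb{I}(P)^*|}$ in $\Bbb{AG}(R)$, contradicting finite genus; if $|\Bbb{I}(I)^*|=2$ then $I=Rx$, and since $Rx\cong R/\mathrm{Ann}(x)=R/P$ has a unique nonzero proper submodule, a Nakayama argument forces $P=\mathfrak{m}$, whence $I$ is simple, contradicting $|\Bbb{I}(I)^*|=2$; if $|\Bbb{I}(I)^*|=1$ then $I$ is simple, so again $P=\mathrm{Ann}(I)=\mathfrak{m}$. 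In every surviving case $\mathfrak{m}\in\Bbb{A}(R)^*$, which is the conclusion you wanted. The idea your sketch lacks is this passage through a \emph{maximal annihilator prime} combined with the infinite descending chain furnished by non-Artinianness.
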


 \begin{proof} Let $R$ be a Noetherian ring and  $\gamma(\Bbb{AG}(R))<\infty$. We can assume that $R$
 is not a domain.  If $R$ is an Artinian ring, then by Lemma \ref{2.1}, every ideal of $R$ is a vertex of $\Bbb{AG}(R)$.
   Suppose that  $R$ is not an  Artinian  ring.
 By Lemma \ref{3.1},  there exists an infinite  descending chain $I_{1}\supsetneqq I_{2}\supsetneqq ...\supsetneqq
 I_{n}\supsetneqq\ldots$ in $\Bbb{A}(R)$. Suppose that  $P$ is a maximal element in $\Bbb{A}(R)$ such
  that $I_{1}\subseteq P$. Clearly $P$ is maximal among all annihilators of non-zero elements of
  $R$ and so  by \cite[Theorem 6]{kap}, $P$ is a prime ideal.   Also, $\{I_n~|~n\in\Bbb{N}\}\subseteq P$ and hence
$|\Bbb{I}(P)^*|=\infty$.
   Set  $I={\rm Ann}(P)$.  If $|\Bbb{I}(I)^*|\geq 3$, then $K_{|\Bbb{I}(I)^*|, |\Bbb{I}(P)^*|}$ is a
 subgraph of $\Bbb{AG}(R)$. Since $|\Bbb{I}(P)^*|=\infty$,
 $\gamma({\Bbb{AG}(R)})=\infty$, a contradiction. If
 $|\Bbb{I}(I)^*|=2$, then we must have $I=Rx$ for some $x\in R$, for otherwise, there are two nonzero
 distinct ideals  $Rz$, $Rx\subseteq I$, and so
 $|\Bbb{I}(I)^*|\geq 3$, a contradiction.
  Since ${\rm Ann}(I)={\rm Ann}(x)=P$,  $Rx\cong R/{\rm Ann}(x)$  and $Rx$ has only
  one nonzero  $R$-submodule, we conclude that
 there is no ideals between $P$ and ${\mathfrak{m}}$. Since  $P\subseteq P+{\mathfrak{m}}^{2}\subseteq
 {\mathfrak{m}}$,
 either $P+{\mathfrak{m}}^{2}=P$ or  $P+{\mathfrak{m}}^{2}={\mathfrak{m}}$.  If $P+{\mathfrak{m}}^{2}=P$, then
 ${\mathfrak{m}}^{2}\subseteq P$. It follows that $P={\mathfrak{m}}$. Also,
 if $P+{\mathfrak{m}}^{2}={\mathfrak{m}}$,  then    by Nakayama's lemma, $P={\mathfrak{m}}$. Thus
 $I=Rx$ is a simple $R$-module, contrary to
    $|\Bbb{I}(I)^*|=2$. Thus $|\Bbb{I}(I)^*|=1$, i.e.,  $I$ is a simple $R$-module. This implies that
     $P={\mathfrak{m}}$. Thus ${\mathfrak{m}}$ is a vertex of $\Bbb{AG}(R)$,
     i.e., all non-trivial ideals of $R$ are vertices of $\Bbb{AG}(R)$. \hfill $\square$
 \end{proof}

 \begin{ttheo}\label{3.5}
 Let $R$ be a Noetherian ring such that all non-trivial ideals of $R$ are vertices of $\Bbb{AG}(R)$.
 If  $\gamma(\Bbb{AG}(R))<\infty$,
 then either $R$ is a Gorenstein ring or  $R$ is an Artinian ring with finitely many ideals.
 \end{ttheo}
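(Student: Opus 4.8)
The plan is to make $R$ semilocal via Lemma \ref{3.2}, dispatch the Artinian case with Theorem \ref{2.7}, and then handle the non-Artinian case by hand, producing a $K_{m,3}$ for every $m$ (hence infinite genus) whenever $R$ fails to be \emph{local and Gorenstein}. First I would record two consequences of the hypotheses. By Lemma \ref{3.2}, $R$ has only finitely many maximal ideals $\mathfrak{m}_1,\dots,\mathfrak{m}_n$. If $R$ is a field it is Artinian with finitely many ideals and we are done, so assume otherwise; then each $\mathfrak{m}_i$ is a nonzero proper ideal, hence a vertex of $\Bbb{AG}(R)$, so ${\rm Ann}(\mathfrak{m}_i)\neq(0)$. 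Choosing $0\neq x_i$ with $\mathfrak{m}_i={\rm Ann}(x_i)$ gives a minimal ideal $S_i:=Rx_i\cong R/\mathfrak{m}_i$ with ${\rm Ann}(S_i)=\mathfrak{m}_i$. If $R$ is Artinian, Theorem \ref{2.7} already yields the dichotomy, so from now on I would assume $R$ is \emph{not} Artinian; being Noetherian, $R$ then has infinite length and hence infinitely many ideals.

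Next I would show $R$ must be local. Suppose $n\geq 2$. Since $\mathfrak{m}_1,\mathfrak{m}_2$ are comaximal, the Chinese Remainder Theorem gives $R/(\mathfrak{m}_1\cap\mathfrak{m}_2)\cong R/\mathfrak{m}_1\times R/\mathfrak{m}_2$, a module of length two; as $R$ has infinite length, $\mathfrak{m}_1\cap\mathfrak{m}_2$ has infinite length and therefore is not Artinian as an $R$-module, so it contains infinitely many ideals of $R$, i.e. $|\Bbb{I}(\mathfrak{m}_1\cap\mathfrak{m}_2)|=\infty$. Now the three distinct ideals $S_1,\,S_2,\,S_1+S_2$ each annihilate every ideal $I\subseteq\mathfrak{m}_1\cap\mathfrak{m}_2$, because such $I$ lies in $\mathfrak{m}_1\cap\mathfrak{m}_2\subseteq{\rm Ann}(S_1)\cap{\rm Ann}(S_2)$. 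Thus $\Bbb{AG}(R)$ contains $K_{|\Bbb{I}(\mathfrak{m}_1\cap\mathfrak{m}_2)|,3}$, and by the formula for the genus of complete bipartite graphs $\gamma(\Bbb{AG}(R))=\infty$, a contradiction. Hence $n=1$ and $R$ is local with maximal ideal $\mathfrak{m}:=\mathfrak{m}_1$.

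Finally I would show $R$ is Gorenstein, i.e. ${\rm v.dim}_{R/\mathfrak{m}}({\rm Ann}(\mathfrak{m}))=1$; we already know this dimension is at least $1$. If it were $\geq 2$, then, since $\mathfrak{m}$ kills ${\rm Ann}(\mathfrak{m})$ and $|R/\mathfrak{m}|\geq 2$, the socle would contain at least three distinct one-dimensional $R/\mathfrak{m}$-subspaces $B_1,B_2,B_3$, each a minimal ideal with ${\rm Ann}(B_j)=\mathfrak{m}$. As $R$ is local, every nonzero proper ideal lies in $\mathfrak{m}$ and is therefore adjacent to all three $B_j$; since $R$ is not Artinian there are infinitely many such ideals, so $\Bbb{AG}(R)$ again contains $K_{m,3}$ for every $m$ and $\gamma(\Bbb{AG}(R))=\infty$, a contradiction. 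Hence ${\rm v.dim}_{R/\mathfrak{m}}({\rm Ann}(\mathfrak{m}))=1$ and $R$ is Gorenstein, completing the dichotomy.

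The hard part is the locality step: the essential—and slightly delicate—observation is that \emph{non-Artinianness} forces $\mathfrak{m}_1\cap\mathfrak{m}_2$ to have infinite length and thus to supply infinitely many common neighbours for the three socle-type ideals $S_1,S_2,S_1+S_2$; once this is in place, both the locality and the Gorenstein conclusions follow from the same $K_{m,3}$ mechanism that pervades the Artinian results. One should also take care that the hypothesis ``all non-trivial ideals are vertices'' is exactly what guarantees each $\mathfrak{m}_i$ is associated (so the $S_i$ exist) and that all the ideals used as vertices genuinely have nonzero annihilators.
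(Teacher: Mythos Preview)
Your argument is correct and close in spirit to the paper's, but the organization and one technical step differ. The paper splits into the \emph{local} versus \emph{non-local} cases: in the local case it shows directly that if ${\rm v.dim}_{R/\mathfrak m}({\rm Ann}(\mathfrak m))\geq 2$ then $|\Bbb I(R)|<\infty$ (hence $R$ is Artinian with finitely many ideals); in the non-local case it studies the chain $J(R)\supseteq J(R)^2\supseteq\cdots$, uses the same triple $I_1={\rm Ann}(\mathfrak m_1)$, $I_2={\rm Ann}(\mathfrak m_2)$, $I_1+I_2$ to force the chain to stabilize, and then applies Nakayama's lemma to get $J(R)^n=(0)$, whence $R$ is Artinian and Theorem~\ref{2.7} finishes. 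You instead split into \emph{Artinian} versus \emph{non-Artinian}, invoke Theorem~\ref{2.7} immediately in the first case, and in the second case replace the Nakayama step by a length argument (finite colength of $\mathfrak m_1\cap\mathfrak m_2$ in $R$) to obtain infinitely many subideals of $\mathfrak m_1\cap\mathfrak m_2$. Both routes hinge on the identical $K_{\infty,3}$ mechanism with the three socle-type ideals; your length argument is a clean substitute for the chain-plus-Nakayama device, while the paper's ordering has the minor advantage of not needing to reinvoke Theorem~\ref{2.7} for the local Artinian subcase. One cosmetic point: when you assert $K_{|\Bbb I(\mathfrak m_1\cap\mathfrak m_2)|,3}\subseteq\Bbb{AG}(R)$ you should, strictly speaking, discard $(0)$ and the three ideals $S_1,S_2,S_1+S_2$ from the infinite side to guarantee disjointness of the two vertex classes, but this is harmless since that side is infinite (and the paper is equally informal on this point).
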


 \begin{proof} Suppose that $R$ is  a Noetherian ring,  all non-trivial ideals of $R$ are vertices
 of $\Bbb{AG}(R)$ and  $\gamma(\Bbb{AG}(R))<\infty$. Suppose that $R$ is a local
 ring with maximal ideal ${\mathfrak{m}}$.  We can assume that $R$ is not a Gorenstein
 ring.  Therefore ${\rm v.dim}_{R/{\mathfrak{m}}}({\rm Ann}({\mathfrak{m}}))\geq
 2$. It follows that  $|\Bbb{\rm{Ann}}(\mathfrak{m})^*|\geq 3$ and  ${\rm
 Ann}({\mathfrak{m}}){\mathfrak{m}}=(0)$. If  $|\Bbb{I}(R)|=\infty$, then  we conclude that
 $K_{|\Bbb{I}(R)|-5,3}$ is a subgraph of $\Bbb{AG}(R)$. Thus
 by the formula for the genus of complete bipartite graphs, we
 have $|\Bbb{I}(R)|=\infty$,  a contradiction. Thus $|\Bbb{I}(R)|<\infty$ $R$, i.e., $R$
  is an Artinian ring with finitely many ideals.\\
 Now, we can assume that $R$ is not a local
 ring. It is sufficient to shown
 that every prime ideal is a  maximal ideal.
   By Lemma \ref{3.2}, $R$ has finitely many maximal  ideals.
 Suppose that $k\geq 2$, ${\mathfrak{m}}_{1},...,{\mathfrak{m}}_{k}$ are all maximal ideals of
 $R$ and
 $J(R)={\mathfrak{m}}_{1}\cap\ldots\cap{\mathfrak{m}}_{k}$.   Consider the following chain
 $$J(R)\supseteq J(R)^{2}\supseteq \ldots \supseteq J(R)^{n}\supseteq ...$$
Suppose that $J(R)^{i}\supsetneqq J(R)^{i+1}$ for each $i\geq 1$.
Then $|\Bbb{I}(J(R))|=\infty$.  Since every ideal of $R$ is a
vertex of $\Bbb{AG}(R)$, we conclude that  $I_{1}={\rm
Ann}({\mathfrak{m}}_{1})\neq (0)$ and $I_{2}={\rm
Ann}({\mathfrak{m}}_{2})\neq (0)$. Also,  $I_{1}\nsubseteq I_{2}$
and  $I_{2}\nsubseteq I_{1}$ (otherwise, either
$(0)=I_{1}({\mathfrak{m}}_{1}+{\mathfrak{m}}_{2})=I_{1}R$ or
$(0)=I_{2}({\mathfrak{m}}_{1}+{\mathfrak{m}}_{2})=I_{2}R$, a
contradiction). Clearly
$I_{1}J_{R}=I_{2}J(R)=(I_{1}+I_{2})J(R)=(0)$ and so
$K_{3,|\Bbb{I}(J(R))^*|}$ is a subgraph of $\Bbb{AG}(R)$. Since
$|\Bbb{I}(J(R))|=\infty$, $\gamma(\Bbb{AG}(R))=\infty$, a
contradiction.
 Therefore $J(R)^{n}=J(R)^{n+1}$ for some  $n\geq 1$. Thus by Nakayama's
 lemma,
 $J(R)^{n}=(0)$. Suppose that  $P$ is a prime ideal of $R$, since $J(R)^{n}=(0)\subseteq
 P$, there exists ${\mathfrak{m}}_{i}$, $1\leq i\leq n$ such that ${\mathfrak{m}}_{i}=P$.
  Thus every prime ideal of $R$ is a maximal ideal of $R$. Thus $R$ is an Artinian ring and by
   Theorem \ref{2.7}, $R$ has finitely many ideals.~\hfill $\square$
 \end{proof}

We conclude this paper with the following corollary.

\begin{ccoro}\label{3.6}
 Let $R$ be a non-domain Noetherian local ring. If  $\gamma(\Bbb{AG}(R))<\infty$,
 then either $R$ is  a Gorenstein ring or $R$ is an Artinian ring with finitely many ideals.
 \end{ccoro}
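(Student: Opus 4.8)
The plan is to obtain this corollary directly by chaining the two immediately preceding results, Proposition \ref{3.4} and Theorem \ref{3.5}, so that no fresh graph-theoretic computation is needed. The starting point is that $R$ is a Noetherian local ring with $\gamma(\Bbb{AG}(R))<\infty$, which are exactly the hypotheses of Proposition \ref{3.4}. Applying that proposition yields the dichotomy that either $R$ is a domain or every non-trivial ideal of $R$ is a vertex of $\Bbb{AG}(R)$. Since we are assuming that $R$ is a non-domain, the first alternative is excluded, and we conclude that all non-trivial ideals of $R$ are vertices of $\Bbb{AG}(R)$.

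The second step is to feed this conclusion into Theorem \ref{3.5}. At this stage $R$ is a Noetherian ring (local, in particular) all of whose non-trivial ideals are vertices of $\Bbb{AG}(R)$, and still $\gamma(\Bbb{AG}(R))<\infty$, so the hypotheses of Theorem \ref{3.5} are met verbatim. Its conclusion is precisely that $R$ is either a Gorenstein ring or an Artinian ring with finitely many ideals, which is the desired statement. Thus the proof is simply the composition of the two implications, and locality of $R$ is used only to license the application of Proposition \ref{3.4} (Theorem \ref{3.5} itself does not require it).

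There is essentially no genuine obstacle here, since the corollary is an immediate specialization: the only point meriting care is verifying that the non-domain hypothesis cleanly eliminates the ``domain'' branch of Proposition \ref{3.4}, leaving the hypothesis ``all non-trivial ideals are vertices'' that Theorem \ref{3.5} demands. One should also note that the two intervening results already absorb the hard work: Proposition \ref{3.4} contains the prime-ideal and Nakayama argument showing $\Bbb{m}$ is a vertex, and Theorem \ref{3.5} contains the complete-bipartite-subgraph estimates forcing finiteness of $|\Bbb{I}(R)|$ or the Gorenstein property. Hence the corollary requires only recording that these two theorems compose.
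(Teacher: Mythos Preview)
Your proof is correct and follows exactly the paper's own approach: the paper's proof of this corollary is simply ``By Proposition 3.4 and Theorem 3.5 is clear.'' You have just made explicit the composition of those two implications, which is all that is required.
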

 \begin{proof}
 By Proposition 3.4 and Theorem 3.5 is clear.~\hfill $\square$
  \end{proof}


\begin{thebibliography}{1}
 \bibitem{aal} G. Aalipour, S. Akbari, M. Behboodi, R. Nikandish,
 M. J. Nikmehr and F. Shahsavari, The classification of the annihilating-ideal graph
 of a commutative ring, {\it Algebra Collocuium}, to appear.
 \vspace{-3mm}
 \bibitem{ak-moh} S. Akbari, A. Mohammadian,
On the zero-divisor graph of a commutative ring, {\it J. Algebra}
{\bf 274} (2004) 847-855.
\vspace{-3mm}
\bibitem{and-liv} D. F. Anderson, P.
S. Livingston, The zero-divisor graph of a commutative ring, {\it
J. Algebra} {\bf 217} (1999) 434-447.
 \vspace{-3mm}
\bibitem{and-nas} D. D. Anderson, M. Naseer,  Beck's coloring
of a commutative ring, {\it J. Algebra} {\bf 159} (1993)
500-514.\vspace{-3mm}
\bibitem{beck} I. Beck,  Coloring of commutative rings, {\it J.
Algebra} {\bf 116} (1988) 208-226. \vspace{-3mm}
\bibitem{beh-rak1} M. Behboodi, Z. Rakeei, The annihilating-ideal graph of
 commutative rings I, {\it J. Algebra Appl.} to appear.\vspace{-3mm}
\bibitem{beh-rak2} M. Behboodi, Z. Rakeei, The annihilating-ideal graph of
 commutative rings II, {\it J. Algebra Appl.} to appear.\vspace{-3mm}
\bibitem{hal}P. R. Halmos. {\it Linear Algebra problem Book}, The
  mathematical Association of America, 1995.\vspace{-3mm}
  \bibitem{kap} I. Kaplanski, {\it Commutative Rings}. rev. ed. Chicago:
  Univ. of Chicago Press, 1974.\vspace{-3mm}
\bibitem{ring1} G. Ringel, Der vollstandinge paare graph auf
nichtorientierbarenFlachen, {\it J. Reine Angew. Math.} {\bf 220}
(1975) 89-93.\vspace{-3mm}
\bibitem{ring2} G. Ringel, {\it Map Color Theorem},
Springer-verlag, New York, 1974.\vspace{-3mm}
\bibitem{ring-you} G. Ringel, J. W. T. Youngs, Solution of the heawood map-coloring problem,{\it Proc.
Nat. Acad. Sei. USA}, {\bf 60} (1968) 438–445.\vspace{-3mm}
\bibitem{wang} Hsin-Ju Wang, Zero-divisor graphs of genus one, {\it J. Algebra} {\bf 304} (2006) 666-678.\vspace{-3mm}
\bibitem{wick1}Cameron Wickham, Classification of Rings with
Genus One Zero-Divisor Graphs,
 {\it  Comm.  Algebra} {\bf 36} (2008) 325-345.\vspace{-3mm}
\bibitem{wick2} Cameron Wickham, Rings whose zero-divisor graphs
have
  positive genus, {\it J. Algebra} {\bf321} (2009) 377-383.\vspace{-3mm}
\bibitem{white} A. T. White, {\it Graphs, Groups and Surfaces},
North-Holland:
 Amsterdam, 1973.
 \end{thebibliography}
\end{document}